\newcommand{\norm}[1]{\left|\left| #1 \right|\right|}
\newcommand{\ltwo}[1]{\left|\left| #1 \right|\right|_{L^2}}
\newcommand{\linfty}[1]{\left|\left| #1 \right|\right|_{L^\infty}}
\newcommand{\abs}[1]{\left| #1 \right|}
\newcommand{\dx}{\mathrm{d}x}
\newcommand{\dy}{\mathrm{d}y}
\newcommand{\ds}{\mathrm{d}s}
\newcommand{\dd}{\mathrm{d}}
\newcommand{\dg}{\mathrm{d}g}
\newcommand{\R}{\mathbb{R}}
\newcommand{\N}{\mathbb{N}}
\renewcommand{\P}{\mathrm{P}}
\newcommand{\Fel}{\mathcal{F}_\mathrm{elastic}}
\newcommand{\F}{\mathcal{F}}
\renewcommand{\div}{\operatorname{div}}
\newcommand{\avgint}{-\!\!\!\!\!\!\int}
\newcommand{\Laplace}{\Delta}
\newcommand{\T}{\mathrm{T}}
\newcommand{\db}[1]{\left[\!\left[ #1 \right]\!\right]}
\newcommand{\eps}{\varepsilon}
\theoremstyle{plain}
\newtheorem{proposition}{Proposition}[section]
\newtheorem{theorem}[proposition]{Theorem}
\theoremstyle{definition}
\newtheorem{hypothesis}[proposition]{Hypothesis}
\theoremstyle{remark}
\newtheorem{remark}[proposition]{Remark}
\title{Effective behavior of an interface propagating through a periodic elastic medium}
\author{Patrick W. Dondl\\ {\it \small Durham University, Durham, UK}\\
{\it \small Email: patrick.dondl@durham.ac.uk}\\
\\ and\\ \\ 
Kaushik Bhattacharya\\
{\it \small California Institute of Technology, Pasadena, USA}\\
{\it \small Email: bhatta@caltech.edu}}
\begin{document}
\maketitle

\begin{abstract}
We consider a moving interface that is coupled to an elliptic equation in
a heterogeneous medium.  The problem is motivated by the study of displacive solid-solid phase
transformations.   We show that a nearly flat interface is given by the graph of the function
$g$ which evolves according to the equation $g_t (x) = -(-\Laplace)^{1/2}g (x) + \varphi(x, g(x)) + F$.
This equation also arises in the study of dislocations and fracture.  We show in the periodic setting
that such
interfaces exhibit a stick-slip behavior associated with pinning and depinning.  Further, we present some numerical 
evidence that the effective velocity of the  phase boundary scales as the square-root of the excess 
macroscopic force above the depinning transition.
\end{abstract}

\label{sec:shallow}
\section{Introduction}
\begin{figure}
\centering
\resizebox{0.6\textwidth}{!}{\input{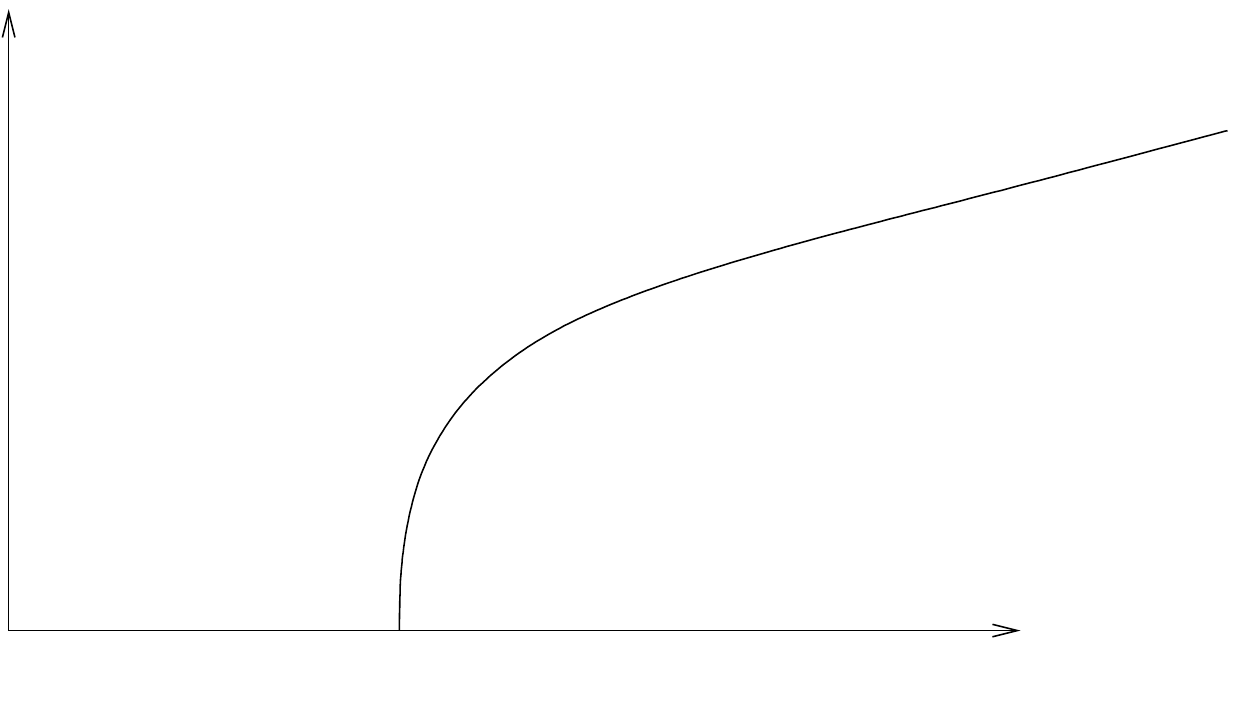_t}}
\caption[Pinning-depinning behavior, as calculated from a one-dimensional model in~\cite{Bhattacharya_99b}]{Pinning-depinning behavior as calculated from a one-dimensional model in~\cite{Bhattacharya_99b}. The interface is stuck up to a critical force $F^*$, and breaks free above it with a particular scaling.  For large $F$, the average velocity is linear in the applied force.}
\label{fig:depinning_1d}
\end{figure}

Hysteresis is ubiquitous in materials science, and is associated with nucleation and propagation of interfaces and defects.  This paper concerns the propagation of interfaces immersed in an elastic medium.  In particular, the paper is motivated by phase boundaries in solids that undergo a displacive phase transformation such as the martensitic phase transformation.  In these transformations, one has phase boundaries across which the crystal structure changes without any diffusion or loss of compatibility.  Many interesting properties of such materials, like the shape-memory effect, are associated with the nucleation and evolution of these phase boundaries.  As the interface propagates, the change in crystal structure potentially gives rise to elastic fields.  Thus, one has a moving interface problem that is coupled to an elasticity problem.  Further, every material contains defects like non-transforming precipitates which makes the medium inhomogeneous.  An important question is the role of these defects, and this motives our current work.  Similar issues arise in ferroelectricity, ferromagnetism, and other phenomena.

There is a well-developed framework to study the evolution of martensitic phase boundaries, and this is described in detail in the recent monograph of Abeyaratne and Knowles \cite{Abeyaratne_06a}.  Briefly, one defines a thermodynamic driving force either through the rate of dissipation or through the variation in the total energy with respect to the position of the interface, and then postulates a kinetic relation that relates the driving force to the normal velocity of the interface.  Microscopic theories suggest that the kinetic relation has viscous character passing smoothly through the origin \cite{Abeyaratne_91b,Purohit_03a}.  Such a kinetic relation predicts that the hysteresis goes to zero as the rate of loading goes to zero.  However, experiments clearly show otherwise: the hysteresis does not go to zero with loading rate and instead settles on a non-zero value independent of loading-rate for slow enough rates.  Such observations suggest a stick-slip behavior where the interface is stationary below a critical driving force and moves freely above it.  It is often suggested that pinning of the phase boundary by defects is responsible for this transition from microscopic viscous to macroscopic stick-slip behavior.

%The shape memory effect that occurs in some metallic alloys has been studied extensively by engineers, physicists, and mathematicians since its discovery in the 1960s. After undergoing an apparently plastic deformation at low temperature, these alloys return to their original shape when heated above a certain critical temperature. They seem to `remember' their previous configuration. The peculiar behavior that these materials exhibit stems from an underlying phase transition---the martensitic phase trans\-for\-mation---in the metallic alloy. This phase transformation is characterized by a diffusionless transition from a solid high-temperature phase to a solid low-temperature phase of a different symmetry group of lower order. This change in the lattice parameters when crossing the critical temperature is abrupt, however, the transition temperature from martensite to austenite is higher than the transition temperature from austenite to martensite. The material exhibits hysteresis, a phenomenon that has been studied experimentally and in a one-dimensional model by Abeyaratne, Chu, and James~\cite{Abeyaratne_96a}. Commonly, hysteresis is included in the modeling assumptions for shape memory alloys~\cite{Achenbach_85a, Ortin_92a, Bhattacharya_97b, Chenchiah_99a}.

%This observed behavior is different from the suggestion of microscopic theory. These theories yield that the rate of transformation is linear in the applied stress~\cite{Purohit_03a, Abeyaratne_91b}. But why is the phase boundary then stuck for small applied forces?

 A one-dimensional calculation, as found in~\cite{Abeyaratne_96a, Bhattacharya_99b} illustrates how a local wiggly potential can pin a phase boundary and lead from a linear kinetic relation to a stick-slip behavior. Assume a bar with a 1-periodic local driving force $\varphi(x)$ (smooth and with non-degenerate global maximum and minimum), and assume that the velocity of the interface is given as $v=\varphi+F$, where $F$ is the constant external applied force. The amount of time it takes for the interface to travel one period can now easily be calculated to be
\begin{equation}
T = \int_0^1 \frac{\dg}{F+\varphi(g)},
\end{equation}
if $F> -\min \varphi$ or  $F< -\max \varphi$. Otherwise (i.e., if $-\min \varphi < F < -\max \varphi)$, the time is infinite and the interface is stuck.   Further, close to the critical $F$, say $ F \approx -\max \varphi$, the interface is slow only in a few isolated points but propagating freely everywhere.  This implies, under some non-degeneracy and regularity conditions, that the effective velocity scales as the square-root of the excess force.  Thus, the effective velocity $\bar{v} = \frac{1}{T}$ of the interface now exhibits a behavior of the form shown in Figure~\ref{fig:depinning_1d}.  A rigorous proof of the transition from a viscous microscopic kinetic law to a rate independent evolution through the interaction with a wiggly potential can be found in~\cite{Mielke:2011ji}. 
The question whether such a stick-slip behavior is also observed in models for phase transformations in higher dimension motivates this work.

We present a sharp interface model for the quasistatic evolution of a martensitic phase boundary in higher dimensions in Section 2.  We limit ourselves to the scalar anti-plane shear setting (where the displacements are scalars) though the ideas and results hold for the general case.  In this model, a free boundary separates two material phases. Each phase is characterized by a distinct transformation or stress-free strain where the elastic energy density admits its minimum.    We also assume that the material contains a number of non-transforming precipitates.  Importantly, both the phases as well as the non-transforming precipitates have the same energy.  A similar model was studied by Craciun~\cite{Craciun_01a}.

We then derive an approximate model for a nearly flat interface.  We show, using methods of $\Gamma$-convergence, that the elastic energy of a nearly flat interface is approximated by the 
$H^{1/2}$-norm of a function whose graph describes the interface (Theorem 2.1).  We also argue that at low volume fraction, the precipitates give rise to a local forcing which scales similarly to the elastic energy.  We thus conclude that the interface is described by the graph of a function $g$ which is governed by the equation
\begin{equation} \label{model}
g_t = -(-\Laplace)^{1/2}g + \varphi(x_1, ..., x_n, g(x_1, ..., x_n)) + F
\end{equation}
for a given $\varphi:{\mathbb R}^{n+1} \rightarrow {\mathbb R}$ with zero mean.
On the periodic domain we consider, this equation may be compactly written by its Fourier series,
\begin{equation}
\hat{g}_t(k) = - \abs{k} \hat{g}(k) + \hat{\varphi}(k) + \hat{F}(k).
\end{equation}
From now on, $\hat{g}$ indicates the Fourier series of the periodic function $g$. Note that the equation is still nonlinear, since the driving force $\varphi$ depends on $g$. 

While we derive this model from phase transformations, it has also been used to study dislocations~\cite{Koslowski_02a} (see~\cite{Forcadel:2009vn} for some rigorous analysis of that model) as well as fracture~\cite{Ponson_09a}.  In fact, a very similar model is derived in~\cite{Forcadel:2009ec} as the homogenized limit of an interacting system of individual dislocations. Front-type solutions as limits of reaction-fractional-diffusion equations are derived in~\cite{Imbert:2009ut}. A closely related 
parabolic model, 
\begin{equation} \label{parabolic}
g_t = \Laplace g + \varphi + F
\end{equation}
has been used to study pinning of surface energy dominant interfaces by defects.  The large physics literature has concentrated on the situation where $\varphi$ is random, and has shown using scaling arguments and numerical simulation that these equations lead to a pinning/depinning transition with a critical exponent which varies from situation to situation~\cite{Barabasi_95a}.  Dirr and Yip~\cite{Dirr_06a} presented a rigorous analysis of the parabolic model (\ref{parabolic}) in the periodic setting ($\varphi$ is periodic).   A rigorous analysis of the random case remains the topic of ongoing research (see for
example~\cite{Dondl_09f, Dondl_09a}).

We study the behavior of the solutions of (\ref{model}) in Section 3 where $\varphi$ is ($1-$)periodic following Dirr and Yip~\cite{Dirr_06a}.  We show that there is a critical $F^\ast \ge 0$ such that (\ref{model}) admits a stationary solution for all $F \le F^\ast$ (Theorem 3.5).  Further, for each $F>F^\ast$, there exists an unique $T$ such that (\ref{model}) admits a space-time periodic solution (Theorem 3.7 and Proposition 3.9).  Thus, we may regard $1/T$ as the effective velocity of the interface.   In Section 4, we discuss the behavior of the effective velocity near the depinning transition and present some numerical examples indicating that the effective velocity scales as the square-root of the excess force $F-F^*$ in the case of a smooth, non-degenerate heterogeneity.

%In this article, we study such a model by treating the phase boundary as the graph of an evolving function. We derive an approximation for a nearly flat interface, yielding a linear energy of the $H^{1/2}$-norm type sometimes seen in models for dislocations. This approximate model is found to admit stuck and periodically evolving solutions for the phase boundary, thus yielding the desired transition from a linear kinetic relation to a stick-slip (or rate-independent) behavior. Furthermore, the
%average velocity of the interface near the depinning transition satisfies a power-law with exponent $1/2$.

%A rigorous derivation of the approximate model in the sense of $\Gamma$-convergence is presented in the following section. Section~\ref{sec:pinning} shows existence of pinned and evolving states of the phase boundary. The asymptotic behavior of the average velocity of the phase boundary is examined in Section~\ref{sec:powerlaw}. Finally, in
%Section~\ref{sec:numerical_depinning}, the depinning transition is examined numerically.

\section{A model of phase transformations}

\subsection{Phase transformations in the presence of defects}

\begin{figure}
\begin{center}
\resizebox{0.4\textwidth}{!}{\input{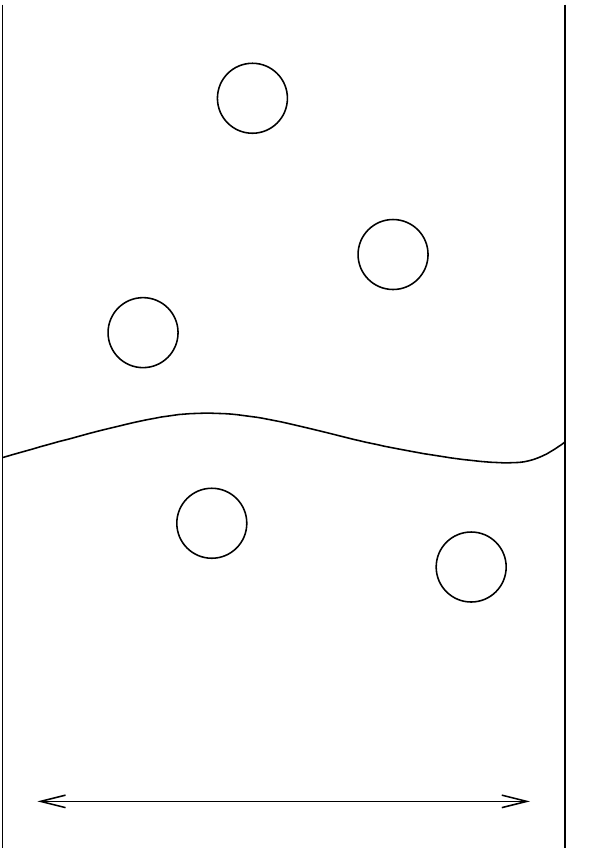_t}}
\end{center}
\caption{A phase boundary in a strip containing non-transforming precipitates.}
\label{fig:strip}
\end{figure}

We consider a model proposed by Craciun \cite{Craciun_01a}.  Since we are interested in the overall propagation, we consider the domain to be a strip, $\Omega = T^{n} \times \R$ where $T^{n}$ is an $n$-dimensional torus as shown in Figure~\ref{fig:strip}.  The domain $\Omega$ is divided into two parts, $E$ and $\Omega\setminus E$ occupied by two phases, and separated by the phase boundary $\Gamma$ of codimension 1.   The domain also contains a number of non-transforming precipitates, occupying the set $\bigcup_i A_i$.  The two phases are characterized by two stress-free strains $\xi^\pm \in {\mathbb R}^{n+1}$, and the non-transforming precipitates are characterized by the stress-free strain $\xi^0$.  We further assume that all phases and precipitates have equal elastic modulus (which we take to be identity without loss of generality).  Thus the elastic energy of domain subjected to the displacement $u: \Omega \rightarrow {\mathbb R}$ is given by
\begin{equation}
\Fel = \int_\Omega \frac{1}{2}\norm{ \nabla u - \xi_E }^2 \dx \dy .
\end{equation}
where
\begin{equation}
\xi_E(x,y) = \left\{ \begin{array}{ll} 
\xi^+ & (x,y) \in E \setminus \bigcup_i A_i \\
\xi^- & (x,y) \in (\Omega \setminus E) \setminus \bigcup_i A_i \\
\xi^0 & (x,y) \in \bigcup_i A_i.
\end{array} \right.
\end{equation}
and $\nabla$ denotes gradient with respect to $(x,y)$.

For a given interface $\Gamma$, we obtain the displacement by minimizing the energy subject to appropriate boundary conditions.  This displacement satisfies the Euler-Lagrange equation
\begin{equation} \label{EL}
\Delta u = \div \xi_E .
\end{equation}
We then say that the interface $\Gamma$ evolves in a specified manner that reduces the (optimal) elastic energy \cite{Abeyaratne_06a}.

In this section, we seek to find an approximation for this particular model.  To motivate the approximation, consider the situation where one does not have any non-transforming precipitates and the interface that minimizes the total energy. It is easy to show in the current scalar setting that the interface is a plane with normal $n = (\xi^+ - \xi^-)/|\xi^+ - \xi^-|$ \footnote{In the vectorial setting of linearized elasticity, there are two possible normals \cite{Kohn_91a}}.  Therefore, we expect that an arbitrary interface will soon become almost planar with this normal and then evolve in an almost planar manner. Further, in the presence of defects, we expect the interface to be distorted close to them due to the elastic fields created by the defects but be largely planar away from them. Thus, if the concentration of defects is small, the interface remains largely planar.  This is consistent with numerical observations~\cite{Dondl_07a}. All of this motivates us to seek an approximation for the model in the case that the interface is almost planar.

Due to the linearity of the Euler-Lagrange equation (\ref{EL}), one can split the transformation strain and the displacement into
components depending only on the interface and on the precipitates, respectively. Take
\begin{equation}
u =  u^\Gamma + u^\P; \quad \xi_E = \xi^\Gamma +  \xi^\P
\end{equation}
and fix
\begin{equation}
\xi^\Gamma(x) = \left\{ \begin{array}{ll} 
\xi^+ & x \in E\\
\xi^- & x \in (\Omega \setminus E),
\end{array} \right.
\end{equation}
and
\begin{equation}
\xi^P(x) = \left\{ \begin{array}{ll} 
\xi^0-\xi^+ & x \in \bigcup_i A_i \cap E\\
\xi^0-\xi^- & x \in \bigcup_i A_i  \cap (\Omega \setminus E).
\end{array} \right.
\end{equation}
The displacements $u^\P, u^\Gamma$ solve the Euler-Lagrange equations associated with $\xi^\Gamma, \xi^\P$,
\begin{equation} \label{EL2}
\Laplace u^\Gamma = \div \xi^\Gamma; \quad \Laplace u^\P = \div \xi^\P.
\end{equation}
This fixes the functions $u^\Gamma$ and $u^\P$ up to an affine component.

Substituting this decomposition back into the elastic energy, we can expand it as follows:
\begin{eqnarray}   \label{eq:energysplit} 
\int_\Omega \frac{1}{2}\norm{\nabla u - \xi_E}^2 &=& 
\int_\Omega \frac{1}{2}\norm{\nabla u^\Gamma - \xi^\Gamma}^2 \\
&& + \int_\Omega (\nabla u^\P - \xi^\P) \cdot \nabla u^\Gamma  \notag \\
&& - \int_\Omega (\nabla u^\P - \xi^\P) \cdot \xi^\Gamma. \notag\\
&& + \int_\Omega  \frac{1}{2}\norm{\nabla u^\P - \xi^\P}^2 \notag 
\end{eqnarray}
The first term in \eqref{eq:energysplit} is the energy associated with the interface $\Gamma$ in the absence of any precipitates.  We shall call this the {\it self-energy} of the interface, and we find an approximation for it in section~\ref{sec:Gamma_convergence}. We can use the Euler-Lagrange equation (\ref{EL2}) to show that the second term is identically zero, assuming that the affine component of $u^\Gamma$ is chosen appropriately. 
The remaining terms describe the interaction of the precipitates with the interface, which is dealt with in a heuristic manner in section~\ref{sec:approx_interaction}.

We seek approximations to the self energy and the interaction energy when the interface is almost planar.  We specialize to the case when 
\begin{equation}
\label{eq:xipm}
\xi^\pm = (0, \dots, 0, \pm 1/2)^\T
\end{equation}
and the preferred normal is $n = (0, \dots, 0, 1)^T$.  We assume that the phase boundary is the graph of a $H^1$ and essentially bounded function $g$, i.e.,
\begin{equation}
\Gamma = \left\{ (x_1, x_2, ..., x_n, y) \textrm{ such that } y = g(x_1, ..., x_n) \right\},
\end{equation}
and the transformed domain $E = \{ (x,y) : y< g(x) \}$. 

%We study the model on a `strip,' i.e., a tensor product of the $n$ dimensional torus and the real line, $\Omega = T^{n} \times \R$. The $n$ coordinates on the torus are denoted by $x$, the single coordinate normal to the torus $y$. 

%
%Consider the linear elastic energy, arising from a particular phase distribution given by $E$, 
%\begin{equation}
%\label{eq:elastic_energy}
%\Fel = \int_\Omega \frac{1}{2}\norm{ \nabla u - \xi_E }^2 \,\dx
%\end{equation}
%for an anti-plane shear deformation $u$. For the sake of simplicity, we consider a fixed transformation strain of a particular form. The transformation strain $\xi_E$ is piecewise constant, and further assumed to have the following values
%\begin{equation}
%\xi_E(x,y) = \left\{ \begin{array}{lll} (0,0, ..., +1/2)^\T & \textrm{for} & (x,y)\in\Omega\setminus E \\
%                                      (0,0, ..., -1/2)^\T & \textrm{for} & (x,y)\in E
%                  \end{array} \right.
%\end{equation}
%This specific choice of  transformation strain has a jump discontinuity in the $y$ direction, therefore, a perfectly flat interface with normal in $y$-direction has vanishing elastic energy.

%The elastic energy~\eqref{eq:elastic_energy} describes the self energy of a martensitic interface. We will derive, in the spirit of $\Gamma$-convergence, the limit of this energy for an almost flat interface.

\subsection{Approximation of the self-energy}
\label{sec:Gamma_convergence}

%We seek an approximation under the assumption that the deviation of the phase boundary from the flat case with normal $(0, 0, ..., 1)^\T$ is small. To this end, we first assume that the phase boundary is the graph of a $C^1$ function $g$, i.e.,
%\begin{equation}
%\Gamma = \left\{ (x_1, x_2, ..., x_n, y) \textrm{ such that } y = g(x_1, ..., x_n) \right\},
%\end{equation}
%such that $E = \{ (x,y) : y<g(x) \}$. Without loss, we assume that $g \colon T^n \to (0,1)$,

We first show that the self-energy, the first term on the right hand side of (\ref{eq:energysplit}), scales as $\varepsilon^2$ if the interface is rescaled by a factor $\varepsilon$ and in the limit may be approximated by the one half the square of the $H^{1/2}$ semi-norm of $g$.  Since this term is independent of the precipitates, we ignore these for now.
It is convenient to subtract the piecewise constant, curl-free, function $\frac{1}{2}(1-H(y))$\footnote{$H(y)$ denotes the Heavyside step function.} from the transformation strain $\xi_E$,
in order to make it supported in a bounded region around $y=0$. This does not change the energy of minimizers of the energy functional, since one can simply
subtract the respective integral from $u$. From now on, we thus assume that, for a given function $\tilde{g}\colon T^n \to \R$,
\begin{equation}
\xi_{\tilde{g}}^n(x,y) = \left\{ \begin{array}{ll} 0  & \textrm{for $y\le0$ and $y\le \tilde{g}$} \\
-1 & \textrm{for $y\le 0$ and $y>\tilde{g}$} \\
1 & \textrm{for $y>0$ and $y\le \tilde{g}$} \\
0 & \textrm{for $y> 0$ and $y>\tilde{g}$} \end{array}\right.
\end{equation}
and $\xi_{\tilde{g}}(x,y) = \left(\begin{array}{c} 0 \\ \vdots \\ 0 \\  \xi_{\tilde{g}}^n(x,y) \end{array}\right)$.
In the following, we take $\tilde{g}= \varepsilon g$ for $g \in H^1(T^n) \cap L^\infty(T^n)$, $\varepsilon$ small, and prove the convergence of the funtional
\begin{equation}
\F_\varepsilon( u ) = \int_\Omega \frac{1}{2}\norm{\nabla u - \frac{1}{\varepsilon} \xi_{\varepsilon g} }^2,
\end{equation}
in the sense of $\Gamma$-convergence with respect to the usual $L^1_\mathrm{loc}$ metric, to the limit functional
\begin{equation}
\F(u) = \left\{ \begin{array}{ll} \int_{\Omega \setminus \{y=0\}} \frac{1}{2}\norm{\nabla u}^2 & \textrm{if $\db{u}=g$ a.e.} \\
	\infty & \textrm{otherwise}. \end{array} \right.
\end{equation}
Both functionals are assumed to be infinite if $u\notin H^1_\mathrm{loc}$.
The expression $\db{u}$ denotes the jump of $u$ across $\{y=0\}$ (i.e., the difference of the respective traces). We write
\begin{eqnarray*}
u_\varepsilon^+ (x,y) &=& u_\varepsilon(x, y+\varepsilon) \\
u_\varepsilon^- (x,y) &=& u_\varepsilon(x, -y-\varepsilon)  
\end{eqnarray*}
Without loss of generality, we assume here that $\norm{g}_{L^\infty} \le 1$, otherwise the definition of $u_\varepsilon^\pm (x,y)$ needs to be adapted so 
that the interface does not penetrate outside the cutout region.

\begin{theorem}[Approximation of the energy]
\label{thm:Gamma}
Let  $(\varepsilon_j)_{j\in \N}$ 
be a decreasing sequence of positive real numbers converging to zero. The following assertions hold.\\
i) Consider functions $u_{\varepsilon_j}$ such that $\F_{\varepsilon_j}( u_{\varepsilon_j} )$ is uniformly bounded. Then there exists a subsequence (which we relabel and 
also index by $j$) and functions $u^{\pm}\in H^1_\mathrm{loc}(T^n \times (0,\infty))$ , such that
\begin{eqnarray*}
u_{\varepsilon_j}^+ &\rightharpoonup& u^+ \\
u_{\varepsilon_j}^- & \rightharpoonup& u^-
\end{eqnarray*}
weakly in $H^1_\mathrm{loc}(T^n \times (0,\infty))$ (and thus strongly in $L^1_\mathrm{loc}$), modulo a constant function. \\
ii) Given a sequence $u_{\varepsilon_j}$, such that $u_{\varepsilon_j}^\pm \to u^\pm$
in $L^1_\mathrm{loc}(T^n \times (0,\infty))$, we have
\begin{equation}
\liminf \F_{\varepsilon_j}( u_{\varepsilon_j} ) \ge \F( u ),
\end{equation}
where $u(x,y) = u^\pm(x,\pm y)$ for $y>0$ $(+)$ or $y<0$ $(-)$, respectively. \\
iii) Given $u\in H^1(\Omega \setminus \{y=0\})$, there exists a sequence $u_{\varepsilon_j}$ so that $u_{\varepsilon_j}^ + \rightarrow u$ in
$L^1_\mathrm{loc}(T^n\times(0,\infty))$,  $u_{\varepsilon_j}^- \rightarrow u(x,-y)$ in $L^1_\mathrm{loc}(T^n\times(0,\infty))$, and
\begin{equation}
\limsup \F_{\varepsilon_j}( u_{\varepsilon_j} ) \le \F( u ).
\end{equation}
\end{theorem}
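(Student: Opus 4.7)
My plan is to prove the three assertions separately, exploiting that $\xi_{\varepsilon g}$ is supported in the thin strip $\{|y|\le\varepsilon\}$ (since $\norm{g}_{L^\infty}\le 1$), so the energy integrand equals $\tfrac{1}{2}\norm{\nabla u_\varepsilon}^2$ outside the strip and $\tfrac{1}{2}\norm{\nabla u_\varepsilon-\tfrac{1}{\varepsilon}\xi_{\varepsilon g}}^2$ inside it.

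For (i), the above observation gives
\[
\int_{\{|y|>\varepsilon_j\}}\norm{\nabla u_{\varepsilon_j}}^2\dx\dy \;\le\; 2\F_{\varepsilon_j}(u_{\varepsilon_j}) \;\le\; C,
\]
which under the translations defining $u_{\varepsilon_j}^\pm$ becomes a uniform $L^2$ bound on $\nabla u_{\varepsilon_j}^\pm$ over $T^n\times(0,\infty)$. I would subtract the averages of $u_{\varepsilon_j}^\pm$ over $T^n\times(0,1)$, apply Poincar\'e modulo constants on each slab $T^n\times(0,R)$ to obtain uniform $H^1$ bounds on compact subsets, and extract the weakly $H^1_\mathrm{loc}$ convergent subsequence by Banach--Alaoglu and a diagonal argument.

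For (ii), I would first recover the Dirichlet term: for any $\delta>0$, once $\varepsilon_j<\delta$,
\[
\F_{\varepsilon_j}(u_{\varepsilon_j}) \;\ge\; \tfrac{1}{2}\int_{\{|y|>\delta\}}\norm{\nabla u_{\varepsilon_j}}^2,
\]
so WLOG the liminf is finite and (i) supplies weak $H^1_\mathrm{loc}$ compactness (the weak limit agreeing with the $L^1_\mathrm{loc}$ limit $u^\pm$ up to additive constants). Weak lower semicontinuity on each slab $T^n\times(\delta,R)$ followed by $\delta\downarrow 0$ and $R\uparrow\infty$ yields $\tfrac{1}{2}\int_{\Omega\setminus\{y=0\}}\norm{\nabla u}^2$. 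To identify the jump I would integrate in $y$ across the strip,
\[
u_{\varepsilon_j}(x,\varepsilon_j)-u_{\varepsilon_j}(x,-\varepsilon_j) \;=\; g(x) + \int_{-\varepsilon_j}^{\varepsilon_j}\!\bigl(\partial_y u_{\varepsilon_j}-\tfrac{1}{\varepsilon_j}\xi^n_{\varepsilon_j g}\bigr)\ds,
\]
using the crucial identity $\int_{-\varepsilon}^{\varepsilon}\tfrac{1}{\varepsilon}\xi^n_{\varepsilon g}(x,s)\ds = g(x)$, which holds irrespective of the sign of $g(x)$. Cauchy--Schwarz bounds the remainder in $L^2(T^n)$ by $\sqrt{4\varepsilon_j\F_{\varepsilon_j}(u_{\varepsilon_j})}\to 0$, while the compact trace embedding $H^1(T^n\times(0,1))\to L^2(T^n\times\{0\})$ upgrades the weak $H^1_\mathrm{loc}$ convergence to strong $L^2$ trace convergence. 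Fixing the relative offset of $u^+$ and $u^-$ consistently, I obtain $\db{u}=g$ a.e., forcing $\F(u)$ onto its finite branch.

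For (iii), I would first handle smooth $u$ with smooth traces satisfying $\db{u}=g$ exactly (reducing from general $u\in H^1(\Omega\setminus\{y=0\})$ via the splitting $u=v+\tfrac{1}{2}\mathrm{sign}(y)g$ with $v\in H^1(\Omega)$, mollifying $v$, and a diagonal argument that preserves the jump). The explicit recovery sequence is
\[
u_\varepsilon(x,y) \;=\; \begin{cases} u(x,y-\varepsilon), & y>\varepsilon,\\[0.25em] u^-(x,0) + \int_{-\varepsilon}^{y}\tfrac{1}{\varepsilon}\xi^n_{\varepsilon g}(x,s)\ds, & |y|\le\varepsilon,\\[0.25em] u(x,y+\varepsilon), & y<-\varepsilon, \end{cases}
\]
which is continuous across $y=\pm\varepsilon$ by the same identity and satisfies $\partial_y u_\varepsilon = \tfrac{1}{\varepsilon}\xi^n_{\varepsilon g}$ on the strip, so only the tangential gradient contributes to $\F_\varepsilon$ there. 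That tangential gradient splits as $\nabla_x u^-(\cdot,0)$ plus a piece of the form $\nabla_x g(x)\,\chi_{\{y\text{ past }\varepsilon g(x)\}}$, each of $L^2$ mass $O(\sqrt{\varepsilon})$ on a strip of width $2\varepsilon$, so the strip contribution vanishes. Outside the strip $\nabla u_\varepsilon$ equals $\nabla u$ shifted and $\xi_{\varepsilon g}=0$, contributing exactly $\F(u)$. The main obstacle I expect is the jump identification in (ii): part (i) determines $u^\pm$ only up to independent additive constants, and one must combine the FTC identity above with strong $L^2$ trace convergence to fix the relative offset and secure $\db{u}=g$. A secondary technicality in (iii) is checking that the weak $x$-derivative of the $y$-primitive $\int_{-\varepsilon}^y\tfrac{1}{\varepsilon}\xi^n_{\varepsilon g}\ds$ produces no singular distribution along the moving curve $y=\varepsilon g(x)$; this is fine because the primitive is jointly continuous in $(x,y)$, so its weak $x$-derivative is just $\nabla g(x)$ times a characteristic function of the strip.
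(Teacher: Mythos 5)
Your proposal is correct and follows essentially the same route as the paper: compactness from the uniform gradient bound off the strip, the liminf inequality via weak lower semicontinuity plus a fundamental-theorem-of-calculus/Cauchy--Schwarz estimate across the strip $\{|y|\le\varepsilon\}$ to identify the jump $\db{u}=g$ (the paper uses Jensen's inequality for the same bound), and a recovery sequence that interpolates across the strip with $\partial_y u_\varepsilon=\tfrac{1}{\varepsilon}\xi^n_{\varepsilon g}$ followed by a density argument. Your explicit primitive $\int_{-\varepsilon}^{y}\tfrac{1}{\varepsilon}\xi^n_{\varepsilon g}\,\ds$ is a slightly cleaner rendering of the paper's linear ramp, but the substance is identical.
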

\begin{proof}
\textit{i)} Clearly, $\int_{T^n\times(0,\infty)} \norm{\nabla u_\epsilon^\pm}^2$ is uniformly bounded. This yields the desired compactness.
\\
\textit{ii)} If $\db{u} = g$, the result is immediate from the definition of the energies. 
What remains to show, is that the elastic energy necessarily
blows up if the difference of the respective traces of $u_\varepsilon(x, \varepsilon)$
and $u_\varepsilon(x, -\eps)$ does not converge to the function $g$. Note that, for all $\varepsilon$, one can calculate
\begin{eqnarray}
\F_\varepsilon(u_\varepsilon) &\ge& 
\int_{T^n} \int_{-\eps}^\eps \frac{1}{2}\norm{\nabla u_\varepsilon - \frac{1}{\varepsilon} \xi_{\varepsilon g}}^2\notag \\
&\ge&  \int_{T^n} \int_{-\eps}^\eps  \frac{1}{2}\abs{ \frac{\partial}{\partial y}u_\varepsilon - \frac{1}{\varepsilon} H(\varepsilon g(x)-y) + \frac{1}{\eps}H(-y)}^2 \notag\\
&\ge& \frac{1}{2\varepsilon} \left(  \int_{T^n} \int_{-\eps}^\eps  \frac{1}{2}\abs{\frac{\partial}{\partial y}u_\varepsilon - \frac{1}{\varepsilon} H(\varepsilon g(x)-y) + \frac{1}{\eps}H(-y) } \right)^2 \label{eq:gamma_liminf_jensen} \\
&\ge& \frac{1}{2\varepsilon} \left(\int_{T^n} \frac{1}{2}\abs{  \int_{-\eps}^\eps  \frac{\partial}{\partial y}u_\varepsilon - \frac{1}{\varepsilon} H(\varepsilon g(x)-y) + \frac{1}{\eps}H(-y)} \right)^2 \notag \\
&=& \frac{1}{2\varepsilon} \left(  \int_{T^n} \frac{1}{2}\abs{ (u_\varepsilon(\cdot,\varepsilon)-u_\varepsilon(\cdot, -\eps)) - g } \right)^2. \label{eq:gamma_liminf_trace}
\end{eqnarray}
Jensen's inequality was used in~\eqref{eq:gamma_liminf_jensen}. In~\eqref{eq:gamma_liminf_trace}, $u_\varepsilon(\cdot, y)$ denotes the trace of $u_\varepsilon$ on $(\cdot, y)$. The equality
holds, because $u_\varepsilon$ is in $H^1$, and thus admits a representative that is absolutely continuous on a.e.~line~\cite{Evans_92b}.
This energy cannot be bounded, unless 
$u_\varepsilon(\cdot,\varepsilon^\alpha)-u_\varepsilon(\cdot, -\eps^\alpha)$ converges to $g$ in $L^1$,
and thus the jump of $u$ across $\{y=0\}$ equals $g$, since weak convergence in $H^1$ implies weak convergence of the trace. \\
\textit{iii)} We first assume that the trace of $u$ at $y=0$ from below, denoted by $T^-u$, is in $H^1$.
Take 
\begin{equation}
u_\varepsilon(x,y) =  \left\{ \begin{array}{ll} u(x,y+\varepsilon) & \textrm{for $y\le -\varepsilon$,} \\
T^-u(x)+\frac{y}{\varepsilon}H(\varepsilon g(x)-y) & \textrm{for $-\varepsilon<y<\varepsilon$,} \\
u(x,y-\varepsilon) & \textrm{for $y \ge \varepsilon$}. \end{array}\right.
\end{equation}

Note that this function is in $H^1_\mathrm{loc}(\Omega)$, since the traces at $\pm \varepsilon$ match, inside the strip the $y$-derivative of $u_\varepsilon$ is bounded by $\frac{1}{\varepsilon}$, and the $x$-derivatives of $u_\varepsilon$ are bounded by the sum of the $x$-gradient of $T^-u$ and the
$x$-gradient of $g$, both of which were assumed to be in $L^2$.

The elastic energy $\F_\varepsilon(u_\varepsilon)$
outside the strip of thickness $\varepsilon$ remains exactly equal to $\F(u)$. The  $y$-derivative
of the function $u_\varepsilon$ equals the $n$-component of $\xi_{\varepsilon g}$, and the $x$-derivative remains bounded by the (absolute) sum of that of $T^-u$ and that of $g$. Thus the integral over the vanishing domain $T^n \times (-\varepsilon, \varepsilon)$ goes to zero.

In order to obtain the result for arbitrary $u \in H^1_\mathrm{loc}$, one can employ the usual density argument. Functions with $H^1$-trace are dense and one can approximate with the energy bounded by $F(u)$.
\end{proof}

The above theorem shows that for a nearly flat interface, the energy due to the shape of the phase boundary itself is equal to the energy of
a function with a jump of the appropriate height $g$. It is well known~\cite{Garroni_05a}, that the minimum attained at $\tilde{u}$ of this energy  is equal to one half the $H^{1/2}$ seminorm squared of $g$, or, 
\begin{equation}
\label{eq:approx_self_energy}
\min_{\stackrel{u \in H^1(\R^\pm \otimes T^n)}{\db{u}_{y=0}=g}} \F(u) = \frac{1}{2}[g]^2_{H^{1/2}}.
\end{equation}

\begin{remark}
Note that we have not proved the $\Gamma$-convergence result for the case that $g$ is only in $H^{1/2}$, since under this weaker assumption a recovery sequence can not be found. However, as we will see in Section~\ref{sec:pinning}, solutions of the evolution problem considered will have the required regularity.
\end{remark}

\subsection{Approximation of the interaction energy}
\label{sec:approx_interaction}

We now turn to the interaction energy, the last two terms of the right hand side of (\ref{eq:energysplit}), 
\begin{equation}
{\mathcal F}^\mathrm{int} :=  \int_\Omega -(\nabla u^\P - \xi^\P) \cdot \xi^\Gamma +   \frac{1}{2}\norm{\nabla u^\P - \xi^\P}^2.
\end{equation}

Scaling again the transformation strains $\xi^\P$ and $\xi^\Gamma$ by a factor of $\frac{1}{\eps}$ and assuming an interface height and precipitate radii rescaled  by $\eps$, one can see that an assumption
of a precipitate density of scale $\eps$ yields an order one term for the interaction energy as well.

%To find the approximation for this term, it is convenient to integrate by parts and use the Euler-Lagrange equation (\ref{EL2}) to rewrite the interaction energy:
%\begin{equation}
%{\mathcal F}^\mathrm{int}  = \int_\Omega (\nabla u^P - \xi^P) \cdot \xi^\Gamma .
%\end{equation}
%Now, we assume that the precipitates scale with $\varepsilon$ in the following manner.  The centroid of each precipitate is fixed independent of $\varepsilon$, but its diameter goes as $\varepsilon^{1/(n+1)}$.
%It is easy to see from the Green's function of the Euler-Lagrange equation that the value of $(\nabla u^P - \xi^P)$ at a fixed point away from them scales as $\varepsilon$.  Finally, we subtract a piecewise affine function as in the previous section.  Then,
%\begin{eqnarray}
%{\mathcal F}^{\mathrm{int}}  &=& \int_{T^n} \left\{ \int_0^{\varepsilon g}  (\nabla u^P - \xi^P) \cdot \{0, \dots, 0, 1\}^T \dy \right\} \dx \notag \\
%& \approx \varepsilon^2 &  \int_{T^n} g(x)   \left.\left(\frac{\nabla u^P - \xi^P}{\varepsilon}\right)\right|_{(x,0)} \cdot \{0, \dots, 0, 1\}^T \dx \\
%&=& \varepsilon^2   \int_{T^n} g(x) F(x,0) \dx .
%\end{eqnarray}

Inherently, like the self-energy, ${\mathcal F}^\mathrm{int}$ is of course non-local. However,  the variation of the interaction energy term, as long as the interface does not intersect any of the precipitates, is
local. Assuming again that the interface is the graph of a function $g$, and $\xi^\pm$ given as in~\eqref{eq:xipm} one can write it as
\begin{equation}
\delta_g {\mathcal F}^\mathrm{int} =  [\nabla u^P(x,g(x)) - \xi^P(x,g(x))] \cdot (0,\dots,0,1)^\T =: f(x,g(x)),
\end{equation}
see~\cite{Abeyaratne_06a}.
From now on we assume that the interaction force for small scarcely scattered precipitates can be written in the above local form, ignoring the non-locality when passing through inclusions.
\subsection{The approximate model}

The previous sections show that the energy of an interface described by the graph of a function $g$ may be approximated as
\begin{equation}
{\mathcal E} =  \frac{1}{2}[g]^2_{H^{1/2}} + \int_{T^n}\int_0^{g(x)} f(x,y) \,\dx\dy.
\end{equation}

The evolution of the interface is described as an $L^2$ gradient flow of $g$ with respect to the energy. Of course, $\Gamma$-convergence of the energy does not necessarily imply convergence of solutions to the gradient flow, however, we take the $\Gamma$-convergence result in Theorem~\ref{thm:Gamma} as a good indicator that the evolution of the interface can be approximated by a gradient flow of the limit energy. Since the variation of the $H^{1/2}$ seminorm yields the square root of the Laplacian, we obtain
\begin{equation}
g_t(x,t) = -(-\Laplace)^{1/2}g(x,t) + f(x,g(x,t)).
\end{equation}
This equation may be compactly written by its Fourier series,
\begin{equation}
\label{eq:approx_model}
\hat{g}_t(k,t) = - \abs{k} \hat{g}(k,t) + \widehat{f(\cdot,g(\cdot,t))}(k).
\end{equation}
Note that the equation is still nonlinear, since the driving force $f$ depends on $g$.

Finally, we assume that the distribution of precipitates is periodic in the direction of propagation of the interface. Thus, the forcing
\begin{equation}
f(x,g(x)) =  [\nabla u^P(x,g(x)) - \xi^P(x,g(x))] \cdot (0,\dots,0,1)^\T
\end{equation}
can be split up into a periodic term $\varphi(x,g(x))$ with zero average and a constant term $F$ stemming from boundary conditions at $\pm\infty$, so that
\begin{equation}
f(x,g(x)) = \varphi(x,g(x)) +F.
\end{equation}

\section{Stick-slip behavior}
\label{sec:pinning}

In this section, we study the stationary equation
\begin{equation}
\label{eq:stationary_F}
0 = -(-\Laplace)^{1/2} g + \varphi + F
\end{equation}
and the evolution equation
\begin{equation}
\label{eq:evolution_F}
g_t = -(-\Laplace)^{1/2} g + \varphi + F,
\end{equation}
together with an initial condition, and denote them by~\eqref{eq:stationary_F}$_{F}$ and~\eqref{eq:evolution_F}$_{F}$, respectively,
indicating the dependence on the behavior on the external force $F\ge 0$.  Our strategy and results closely mirror those of Dirr and Yip~\cite{Dirr_06a} who considered the Laplacian case. In this section we will make the following assumption on the wiggly force
$\varphi \colon T^n \times \R \to \R$.
\begin{hypothesis}
The interaction force $\varphi(x,y)$ is periodic in $y$, i.e., $\varphi(x, y) = \varphi(x, y+1)$,
has vanishing average, i.e., $\int_{T^n \times [0, 1)} \varphi(x, y) = 0$ and is Lipschitz continuous in both $x$ and $y$.
\label{hyp:phi1}
\end{hypothesis}

\subsection{Solutions of the evolution equation}

In this section we collect some properties of solutions of the evolution problem~\eqref{eq:evolution_F}$_F$.

\begin{proposition}
\label{thm:existence}
For any initial condition $g(\cdot,0) = g_0 \in C^0$, the quasistatic evolution problem~\eqref{eq:evolution_F}$_F$ admits a unique global in time classical solution. Classical solutions to~\eqref{eq:evolution_F}$_F$ admit a comparison principle. Furthermore, for $g_0 \in L^2$, a mild solution to the evolution problem exists and is classical for any $t>0$.
\end{proposition}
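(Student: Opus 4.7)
The plan is to pose \eqref{eq:evolution_F}$_F$ in mild (variation-of-constants) form and solve it by a contraction-mapping argument, then upgrade the result to a classical solution using the smoothing of the underlying linear semigroup. On the torus $T^n$, the semigroup $S(t)=e^{-t(-\Laplace)^{1/2}}$ acts in Fourier series as $\widehat{S(t)h}(k)=e^{-|k|t}\hat{h}(k)$, is analytic on both $L^2(T^n)$ and $C^0(T^n)$, is a contraction on $C^0(T^n)$, and enjoys smoothing bounds of the form $\|S(t)h\|_{H^s}\lesssim t^{-s}\ltwo{h}$ (hence $\|S(t)h\|_{C^0}\lesssim t^{-s}\ltwo{h}$ for any $s>n/2$). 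The unknown is sought as a fixed point of
\begin{equation*}
\Phi(g)(t) = S(t)g_0 + \int_0^t S(t-s)\big[\varphi(\cdot,g(s))+F\big]\,\ds .
\end{equation*}

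For $g_0\in C^0(T^n)$, Hypothesis~\ref{hyp:phi1} makes the Nemytskii map $g\mapsto\varphi(\cdot,g)$ a Lipschitz self-map of $C^0(T^n)$, so $\Phi$ is a contraction on a suitable ball of $C([0,T_0];C^0(T^n))$ for $T_0$ small, producing a unique local mild solution. To continue this solution globally I would use the comparison principle (established next) to sandwich the solution between the affine envelopes $h^\pm(t)=\pm\bigl(\linfty{g_0}+Mt\bigr)$ with $M=\linfty{\varphi}+|F|$; these are genuine sub/supersolutions because constants lie in the kernel of $(-\Laplace)^{1/2}$, so the resulting $L^\infty$ bound rules out blow-up and delivers global existence.

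For the comparison principle itself, given two classical solutions with $g_1(\cdot,0)\le g_2(\cdot,0)$, I would set $w=g_1-g_2-\eps e^{\lambda t}$ with $\lambda>L$ (the Lipschitz constant of $\varphi$ in $y$) and $\eps>0$. If $w$ ever became positive somewhere, then at the first time and point $(x_0,t_0)$ at which $w(x_0,t_0)=0$ one has $w_t(x_0,t_0)\ge 0$ and $(-\Laplace)^{1/2}w(x_0,t_0)\ge 0$ by the nonlocal maximum principle, yet
\begin{equation*}
w_t(x_0,t_0) \le L\,(g_1-g_2)(x_0,t_0) - \lambda\eps e^{\lambda t_0} = -(\lambda-L)\,\eps e^{\lambda t_0} < 0,
\end{equation*}
a contradiction. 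Sending $\eps\to 0$ yields $g_1\le g_2$, which also gives uniqueness and monotone dependence on data.

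For $g_0\in L^2(T^n)$, the same fixed-point scheme runs in $C([0,T_0];L^2(T^n))$ and produces a mild solution; the smoothing bound $\|S(t)g_0\|_{C^0}\lesssim t^{-s}\ltwo{g_0}$ with $s>n/2$, together with the bounded Duhamel integrand, places $g(\tau)\in C^0(T^n)$ for every $\tau>0$, so the $C^0$ theory above applies on $[\tau,\infty)$. Classical regularity for $t>0$ then follows by a Duhamel bootstrap that exploits the analyticity of $S$ and H\"older-in-time control of the forcing. The main obstacle I expect is precisely this last bootstrap: because $-(-\Laplace)^{1/2}$ is only first order, each spatial derivative extracted from $S(t-s)$ costs a factor $(t-s)^{-1}$ that is borderline non-integrable, so the iteration must be threaded through a carefully chosen fractional Sobolev or H\"older scale in order to make sense of both $(-\Laplace)^{1/2}g$ and $g_t$ pointwise.
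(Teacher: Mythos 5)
Your proposal is correct, but it takes a genuinely different and more self-contained route than the paper. The paper obtains existence, uniqueness and the comparison principle in one stroke by periodically extending $\varphi$ to $\R^n$ and citing the viscosity-solution theory of Droniou and Imbert for such integro-differential equations (their Theorem~5 applies because $\varphi$ is Lipschitz); it then upgrades to a classical solution by observing that $-(-\Laplace)^{1/2}$ generates an analytic semigroup on $C(T^n)$ and invoking the variation-of-constants formula, and for $g_0\in L^2$ it bootstraps through the $L^p$ scale (mild solutions valued in $H^{1,p}$, then Sobolev embedding) to reach continuity at positive time. You instead construct everything by hand: a contraction for the mild formulation in $C([0,T_0];C^0)$, an explicit first-touching-point argument with the penalization $\eps e^{\lambda t}$ for comparison, and Fourier-side smoothing for the $L^2$ case. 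Both arguments ultimately rest on the same analytic-semigroup step for classical regularity, and the borderline $(t-s)^{-1}$ singularity you flag is exactly what the standard Pazy/Lunardi theorem resolves: once the Duhamel term is shown to lie in $D(A^\alpha)$ for $\alpha<1$, the map $s\mapsto\varphi(\cdot,g(\cdot,s))$ is H\"older in time, and $\int_0^t AS(t-s)[f(s)-f(t)]\,\ds$ converges absolutely, so your worry is addressed by your own stated strategy. Two minor remarks: since $\varphi$ is globally bounded and Lipschitz, the local existence time from the contraction is uniform in the data, so global existence follows without the barrier argument (your barriers still give a useful a~priori $L^\infty$ bound); and your comparison proof needs enough spatial regularity to evaluate $(-\Laplace)^{1/2}w$ pointwise at the maximum, which your classical-regularity step supplies, whereas the paper's viscosity framework delivers comparison at lower regularity. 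What your approach buys is independence from the cited viscosity machinery and an explicit quantitative comparison argument; what the paper's buys is brevity and a comparison principle valid for merely Lipschitz (viscosity) solutions.
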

\begin{proof}
We first note that the evolution equation~\eqref{eq:evolution_F}$_F$ can be cast in the form
of~\cite{Droniou:2006jt}, equation (2), by periodically extending $\varphi$ to $\R^n$. Since in our model the function $\varphi$ is assumed to be Lipschitz, it follows from~\cite{Droniou:2006jt}, Theorem 5, that the problem admits a unique Lipschitz continuous viscosity solution, and thus admits a comparison principle. We further remark that the fractional Laplacian generates an analytic semigroup on the space of continuous functions on $T^n$ (for a reference see for example~\cite{Yosida:1996us}, Chapter IX.11, and note that this property is well known for the `regular' Laplacian), and thus the problem also admits a classical solution that can be found via a variation of constants-formula. This also shows that the viscosity solution for the problem is periodic.

For an initial condition only in $L^2$, note that the fractional Laplacian also generates a semigroup on $L^p$, $p\ge 2$, with solutions in $H^{1,p}$, the domain of $-(-\Laplace^{1/2})$ for values in $L^p$. We thus can recursively find spaces of higher and higher integrability for our mild solution at positive time until we obtain a Sobolev-embedding into the space of continuous functions.
\end{proof}

The next proposition concerns an energy estimate of the solution to the evolution equation.\begin{proposition}[Energy estimate]
\label{thm:energy}
Fix $\tau >0$. Then there exists a constant $C$ depending only on $f$ and $\tau$, such that for any solution of~\eqref{eq:evolution_F}$_F$ we have
\begin{equation}
[g(t)]_{H^{1/2}} \le C e^{-t} \norm{g_0}_{L^2} + C \quad\textrm{for}\quad t > \tau.
\end{equation}
\end{proposition}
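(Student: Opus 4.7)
The plan is to combine an energy identity for the homogeneous $H^{1/2}$ seminorm with the smoothing action of the fractional heat semigroup. Since Proposition~\ref{thm:existence} guarantees that for $g_0\in L^2$ the solution is classical for every $t>0$ (and periodic), all formal computations below are justified on any interval $[t_0, \infty)$ with $t_0>0$.

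First I would test the equation against $(-\Laplace)^{1/2} g$ and integrate over $T^n$. Since $(-\Laplace)^{1/2} g$ has vanishing spatial mean, the constant $F$ contributes nothing, and Parseval yields
\begin{equation*}
\tfrac{1}{2}\tfrac{d}{dt}[g(t)]_{H^{1/2}}^2 = -\|(-\Laplace)^{1/2} g\|_{L^2}^2 + \int_{T^n}(-\Laplace)^{1/2} g\cdot\varphi(\cdot,g(\cdot,t))\,\dx.
\end{equation*}
Cauchy--Schwarz followed by Young's inequality controls the $\varphi$ term by $\tfrac{1}{2}\|(-\Laplace)^{1/2} g\|_{L^2}^2+\tfrac{1}{2}\|\varphi\|_{L^\infty}^2|T^n|$. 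Crucially, on $T^n$ the nonzero integer frequencies satisfy $|k|\ge 1$, so
\begin{equation*}
\|(-\Laplace)^{1/2} g\|_{L^2}^2=\sum_{k\neq 0}|k|^2|\hat g(k)|^2\ \ge\ \sum_{k\neq 0}|k||\hat g(k)|^2=[g]_{H^{1/2}}^2.
\end{equation*}
Putting these together yields the Gr\"onwall inequality
\begin{equation*}
\tfrac{d}{dt}[g(t)]_{H^{1/2}}^2 \le -[g(t)]_{H^{1/2}}^2 + C_1,
\end{equation*}
where $C_1$ depends only on $\|\varphi\|_{L^\infty}$. Integrating from any $t_0>0$ gives $[g(t)]_{H^{1/2}}^2\le e^{-(t-t_0)}[g(t_0)]_{H^{1/2}}^2 + C_1$.

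Next I would bootstrap from the $L^2$ datum to an $H^{1/2}$ bound at time $t_0:=\tau/2$ via the variation-of-constants formula
\begin{equation*}
g(t_0) = e^{-t_0(-\Laplace)^{1/2}}g_0 + \int_0^{t_0} e^{-(t_0-s)(-\Laplace)^{1/2}}\bigl(\varphi(\cdot,g(\cdot,s))+F\bigr)\,\ds.
\end{equation*}
On the Fourier side, $[e^{-\sigma(-\Laplace)^{1/2}} v]_{H^{1/2}}^2=\sum_{k\neq 0}|k|e^{-2\sigma|k|}|\hat v(k)|^2\le \bigl(\sup_{s\ge 1} s\,e^{-2\sigma s}\bigr)\|v\|_{L^2}^2$, and this supremum is bounded by a constant $C(\tau)$ for every $\sigma\ge \tau/2$ (and the whole integral term by a constant depending on $\tau$ and $\|\varphi\|_{L^\infty}$, noting that the $F$--contribution lies entirely in the zero mode and is killed by the seminorm). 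Hence $[g(\tau/2)]_{H^{1/2}}\le C(\tau)(\|g_0\|_{L^2}+1)$. Plugging this into the Gr\"onwall bound and absorbing $e^{\tau/2}$ into the constant gives the claimed estimate $[g(t)]_{H^{1/2}}\le C e^{-t}\|g_0\|_{L^2}+C$ for $t>\tau$.

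The main subtlety is that the $H^{1/2}$ seminorm has a kernel (the constants) while the gradient flow generically causes the spatial mean of $g$ to drift linearly in time (this is exactly what drives the depinning); the estimate is only possible because the seminorm is blind to this drift, which is why one must carefully verify that both the dissipation term $\|(-\Laplace)^{1/2}g\|_{L^2}^2$ and the smoothing estimate above operate only on the nonzero modes. The remaining work is bookkeeping in the constants, and the proof essentially mirrors the parabolic analogue in Dirr--Yip with the replacement of $|k|^2$ by $|k|$ in the dissipation spectrum.
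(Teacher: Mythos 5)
Your proof is correct, but it is structured differently from the paper's. The paper works entirely with the Duhamel (variation-of-constants) formula for the mild solution over the whole interval $[0,t]$: it splits the forcing integral into $[0,t-1]$ and $[t-1,t]$, uses the explicit decay $k e^{-2kt}$ on the initial-data term and on the far-past part of the integral, and applies Cauchy--Schwarz in time together with $\int_{t-1}^t e^{-2k(t-s)}\,\ds \le 1/(2k)$ to absorb the factor $k$ on the recent part. You instead combine two ingredients: (a) a differential inequality $\frac{\dd}{\dt}[g]_{H^{1/2}}^2 \le -[g]_{H^{1/2}}^2 + C$ obtained by testing against $(-\Laplace)^{1/2}g$ and exploiting the spectral gap $\abs{k}\ge 1$ on $T^n$, and (b) a short-time smoothing estimate $L^2 \to H^{1/2}$ at $t_0=\tau/2$ via Duhamel. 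Your route makes the dissipative mechanism and the absorbing-set structure more transparent, at the price of having to justify the energy identity (fine here, since Proposition~\ref{thm:existence} gives classical solutions for $t>0$, so $g(t)\in H^1$). The one step you gloss over is the $H^{1/2}$ bound on the inhomogeneous Duhamel term $\int_0^{t_0} e^{-(t_0-s)(-\Laplace)^{1/2}}\varphi\,\ds$: your pointwise-in-$s$ semigroup bound degenerates like $(t_0-s)^{-1/2}$ as $s\to t_0$, so you need either Minkowski's integral inequality plus the integrability of that singularity, or exactly the Cauchy--Schwarz-in-time argument the paper uses (which is the cleaner fix); this is a fixable omission, not a flaw in the strategy. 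Finally, both your conclusion and the paper's deliver the squared estimate $[g]_{H^{1/2}}^2 \le Ce^{-ct}\norm{g_0}_{L^2}^2 + C$, so the exact exponent and power of $\norm{g_0}_{L^2}$ in the stated inequality require the same cosmetic adjustment in either proof.
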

\begin{proof}
First note that the mild solution of~\eqref{eq:evolution_F} is given by the variation of constants formula
\begin{equation}
\hat{g}(k,t) = e^{ -k t } \hat{g}_0(k) + \int_0^t e^{-k (t-s)} \widehat{f(\cdot,g(\cdot,s))}(k) \,\ds
\end{equation}
for $k \in \{0,1,2,\dots\}$.

We have, for $t>1$, by Plancherel's theorem,
\begin{eqnarray}
[g(t,k)]_{H^{1/2}}^2 &=& \sum_{k=1}^\infty k \abs{\hat{g}(t,k)}^2 \\
&=&  \sum_{k=1}^\infty k \abs{e^{ -k t } \hat{g}_0(k) + \int_0^t e^{-k (t-s)} \widehat{f(\cdot,g(\cdot,s))}(k) \,\ds}^2 \\ 
&\le&  \sum_{k=1}^\infty k \abs{e^{ -k t } \hat{g}_0(k)}^2 + \sum_{k=1}^\infty k \abs{\int_0^{t-1} e^{-k (t-s)}  \widehat{f(\cdot,g(\cdot,s))}(k) \,\ds}^2 \\
&& +  \sum_{k=1}^\infty k \abs{\int_{t-1}^t e^{-k (t-s)}  \widehat{f(\cdot,g(\cdot,s))}(k) \,\ds}^2 \\
&\le& C_1 e^{-t} \norm{g_0}_{L^2}+ C \\
&& + \sum_{k=1}^\infty k  \abs{\sqrt{\int_{t-1}^t e^{-2k (t-s)}\,\ds} \sqrt{\int_{t-1}^t \abs{ \widehat{f(\cdot,g(\cdot,s))}(k)}^2 \,\ds}}^2 \label{eq:evol_reg_hoel}\\
&\le& C_1 e^{-t} \norm{g_0}_{L^2}+ C \\
&& + C' \sum_{k=1}^\infty \int_{t-1}^t \abs{ \widehat{f(\cdot,g(\cdot,s))}(k)}^2 \,\ds \\
&\le& C_1 e^{-t} \norm{g_0}_{L^2} + C_2.
\end{eqnarray}
If $t\le 1$, the integral from $0$ to $t-1$ can be disregarded and the integral from $t-1$ to $t$ runs from $0$ to $t$, with no change in the
estimates. In this calculation, we have used H\"older's inequality in~\eqref{eq:evol_reg_hoel} and the fact that $\int_{t-1}^t e^{-k(t-s)} \,\ds = \frac{1-e^{-k}}{k}$ thereafter.
\end{proof}

\subsection{Existence of pinned and space-time periodic solutions}
\label{sec:pinned-and-periodic}

First we assert the existence of a stationary solution for zero external driving force.
\begin{proposition}
\label{thm:stationary_zero}
Under Hypothesis~\ref{hyp:phi1}, equation~\eqref{eq:stationary_F}$_0$ admits a weak solution in $H^{1/2}$.
\end{proposition}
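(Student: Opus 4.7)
The plan is to realize solutions of \eqref{eq:stationary_F}$_0$ as minimizers of the energy
\begin{equation*}
E(g) = \tfrac{1}{2}[g]^2_{H^{1/2}} - \int_{T^n} \Phi(x, g(x))\,\dx, \qquad \Phi(x, y) := \int_0^y \varphi(x, s)\,\ds,
\end{equation*}
whose formal Euler--Lagrange equation is precisely $(-\Laplace)^{1/2} g = \varphi(\cdot,g(\cdot))$; I then run the direct method in $H^{1/2}(T^n)$. The functional is well-defined and G\^ateaux-differentiable because $\varphi$ is bounded and Lipschitz in $y$.

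The main obstacle is that $E$ carries no built-in coercivity in the constant part of $g$, and handling this requires both a careful coercivity estimate and an invariance argument. I first split $\Phi(x,y) = \bar\varphi(x)\, y + \Psi(x,y)$ with $\bar\varphi(x) := \int_0^1 \varphi(x,s)\,\ds$ and $\Psi$ bounded and $1$-periodic in $y$. Hypothesis~\ref{hyp:phi1} gives $\int_{T^n} \bar\varphi\,\dx = 0$, with two consequences. First, writing $\int \bar\varphi\, g = \int \bar\varphi\,(g - \langle g\rangle)$ and using the Poincar\'e-type bound $\norm{g - \langle g\rangle}_{L^2} \le [g]_{H^{1/2}}$ (immediate from $\abs{k}\ge 1$ for $k \in \Z^n\setminus\{0\}$ in the Fourier representation of the semi-norm), Young's inequality yields a coercivity estimate of the form $E(g) \ge \tfrac{1}{4}[g]^2_{H^{1/2}} - C$ uniformly in $\langle g\rangle$. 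Second, $E$ is invariant under integer translations, $E(g + m) = E(g)$ for $m \in \Z$, since $\int_{T^n}(\Phi(x,g+m) - \Phi(x,g))\,\dx = m\int_{T^n}\bar\varphi\,\dx = 0$.

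These two facts together allow me to replace any minimizing sequence $g_n$ by an integer-shifted copy $\tilde g_n$ with $\langle \tilde g_n\rangle \in [0, 1)$, producing a uniform $H^{1/2}$-bound. The compact embedding $H^{1/2}(T^n) \hookrightarrow L^2(T^n)$ then delivers a subsequence with $\tilde g_n \rightharpoonup g^\ast$ weakly in $H^{1/2}$ and strongly in $L^2$. Lower semicontinuity of the semi-norm, together with the Lipschitz estimate $\abs{\Phi(x,\tilde g_n) - \Phi(x,g^\ast)} \le \norm{\varphi}_{L^\infty}\abs{\tilde g_n - g^\ast}$, gives $E(g^\ast) \le \liminf E(\tilde g_n)$, so $g^\ast$ is a minimizer. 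Computing the first variation in an arbitrary direction $h \in H^{1/2}(T^n)$ yields the weak identity $\langle (-\Laplace)^{1/2} g^\ast, h\rangle = \int_{T^n} \varphi(x,g^\ast(x))\, h(x)\,\dx$ for all such $h$, which is the desired weak $H^{1/2}$ solution of \eqref{eq:stationary_F}$_0$.
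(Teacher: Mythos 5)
Your proof is correct, and it follows the same broad strategy as the paper (the direct method applied to the energy $E(g) = \tfrac12[g]^2_{H^{1/2}} - \int_{T^n}\int_0^{g(x)}\varphi(x,\gamma)\,\dd\gamma\,\dx$), but the key step --- controlling the constant Fourier mode, in which $E$ is not coercive --- is handled by a genuinely different mechanism. The paper first minimizes $E$ over the affine slices $H^{1/2}_c=\{g:\int_{T^n}g=c\}$, where compactness is immediate, and then minimizes the resulting value function $G(c)$ over $c\in\R$, using that $G$ is $1$-periodic (by periodicity of $\varphi$ in $y$ and its zero mean) and Lipschitz; the minimality in $c$ is what removes the Lagrange multiplier attached to the mean-value constraint and makes the constrained minimizer a true critical point. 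You instead work unconstrained from the start: the decomposition $\Phi(x,y)=\bar\varphi(x)y+\Psi(x,y)$ with $\int_{T^n}\bar\varphi=0$ gives both the coercivity bound $E(g)\ge\tfrac14[g]^2_{H^{1/2}}-C$ (uniform in $\langle g\rangle$, via the spectral-gap inequality $\norm{g-\langle g\rangle}_{L^2}\le[g]_{H^{1/2}}$) and the invariance $E(g+m)=E(g)$ for $m\in\Z$, so a minimizing sequence can be normalized to have mean in $[0,1)$ and the standard compactness/lower-semicontinuity argument closes the proof. Your route buys a cleaner endgame: since $g^\ast$ minimizes over all of $H^{1/2}$, the first variation vanishes in every direction including constants (so $\int_{T^n}\varphi(x,g^\ast)\,\dx=0$ comes for free), whereas the paper's version leaves the multiplier argument implicit. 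The paper's route, in turn, isolates the periodicity of $c\mapsto G(c)$ as the structural reason a pinned state exists, which is the observation reused later in the threshold-force theorem. Both arguments are sound; yours is, if anything, more self-contained.
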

\begin{proof}
Note first tha $H^{1/2}$ is compactly embedded in $L^2$, independent of dimension. One can thus find a minimizer of the energy 
\begin{equation}
E(g) =  \frac{1}{2}[g]_{H^{1/2}}^2 - \int_{T^n} \int_0^{g(x)} \varphi(x, \gamma) \,\dd \gamma \dx
 \end{equation}
 among functions in $H_c^{1/2} = \{u \in H^{1/2} : \int_{T^n} u = c \}$ with average $c$.
Now denote by
\begin{equation}
G(c) := \min_{g\in H_c^{1/2}} E(g)
\end{equation}
the energy depending on the fixed average $c$. Since $f$ has zero average, we have $G(c+1)=G(c)$. Furthermore, $G$ is Lipschitz by a simple comparison argument. Therefore, $G$ admits a minimum for some $c_0 \in \R$. The  function $g_{c_0}$ is a weak solution to~\ref{eq:stationary_F}$_0$, since it minimizes the corresponding energy.
\end{proof}
\begin{proposition}
\label{prop:regularity_of_stationary}   
Any weak solution ~\eqref{eq:stationary_F}$_F$, $F\ge 0$ is classical (and thus also a
stationary viscosity solution).
\end{proposition}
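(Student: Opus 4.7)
My plan is a standard bootstrap argument starting from $g \in H^{1/2}(T^n)$. The key observation is that Hypothesis~\ref{hyp:phi1} makes $\varphi$ globally bounded (Lipschitz plus 1-periodicity in the second variable, on a compact fundamental domain $T^n \times [0,1)$), so the right-hand side $h(x) := \varphi(x,g(x)) + F$ of the stationary equation lies in $L^\infty(T^n)$ \emph{irrespective} of the regularity of $g$. This bounded right-hand side is the input that drives the whole bootstrap, and the constant $F$ plays no role beyond contributing an additive constant.

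First I would upgrade $g$ to H\"older regularity using regularity theory for $(-\Delta)^{1/2}$. On the torus this is cleanest on the Fourier side: $\hat g(k) = \hat h(k)/|k|$ for $k\neq 0$, with $h \in L^\infty \subset L^p$ for every $p<\infty$, and Bessel-potential estimates yield $g \in W^{1,p}(T^n)$ for every $p<\infty$. Morrey's embedding then gives $g \in C^{0,\alpha}(T^n)$ for every $\alpha \in (0,1)$. One may alternatively quote directly the known H\"older regularity for $(-\Delta)^{1/2}g = h \in L^\infty$ from the fractional-Laplacian literature (Silvestre, Caffarelli--Silvestre).

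Next, since $g$ is now H\"older and $\varphi$ is Lipschitz, the composition $x\mapsto \varphi(x,g(x))$ is H\"older of the same exponent, so $h \in C^{0,\alpha}(T^n)$. A Schauder-type estimate for $(-\Delta)^{1/2}$ then promotes $g$ to $C^{1,\alpha}(T^n)$, where $\alpha$ is chosen so that $1+\alpha$ is not an integer. At this regularity the singular-integral (equivalently, the Fourier-multiplier) definition of $(-\Delta)^{1/2}g$ converges pointwise and coincides with the distributional action, so the equation $(-\Delta)^{1/2} g = \varphi(\cdot,g) + F$ holds pointwise and $g$ is classical. That a classical solution is also a viscosity solution is routine (insert smooth test functions touching $g$ from above or below), and uniqueness of viscosity solutions is already recorded in Proposition~\ref{thm:existence} via~\cite{Droniou:2006jt}.

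The main obstacle is stating the $C^{1,\alpha}$-Schauder estimate for $(-\Delta)^{1/2}$ on the torus cleanly, since the standard references are written on $\R^n$. I would handle this either by transfer via periodic extension, exploiting the fast off-diagonal decay of the kernel of $(-\Delta)^{-1/2}$ to localize, or directly via the Littlewood--Paley / Besov characterization of H\"older spaces on $T^n$, noting that the Fourier multiplier $|k|^{-1}$ is a bijection from $B^s_{\infty,\infty}(T^n)$ to $B^{s+1}_{\infty,\infty}(T^n)$. A smaller technical point is the measurability and weak-formulation meaning of $\varphi(\cdot,g(\cdot))$ when $g$ is only in $H^{1/2}$; this is routine because $\varphi$ is Carath\'eodory (in fact Lipschitz), so the composition is measurable and the pointwise $L^\infty$ bound is unambiguous.
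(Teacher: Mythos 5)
Your bootstrap is correct, but it takes a genuinely different route from the paper. The paper's proof is parabolic: it plugs the weak stationary solution $g$ into the variation-of-constants formula for the evolution equation, checks via the Fourier identity $e^{-kt}\hat g(k) + \int_0^t e^{-k(t-s)}k\,\hat g(k)\,\ds = \hat g(k)$ that $g$ is a stationary \emph{mild} solution, and then invokes the smoothing of the analytic semigroup generated by $-(-\Laplace)^{1/2}$ (the $L^p$-bootstrap already set up in Proposition~\ref{thm:existence}) to conclude that $g$ is classical. You instead run a purely elliptic bootstrap: $h = \varphi(\cdot,g)+F \in L^\infty$ independently of the regularity of $g$, hence $g = (-\Laplace)^{-1/2}h$ (plus a constant) lies in $W^{1,p}$ for all $p<\infty$ by Riesz-transform/Bessel-potential bounds, hence in $C^{0,\alpha}$ by Morrey, hence $h\in C^{0,\alpha}$ by the Lipschitz continuity of $\varphi$, and a Schauder estimate for $(-\Delta)^{1/2}$ gives $g\in C^{1,\alpha}$, at which point the singular-integral and distributional definitions agree pointwise. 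Both arguments are sound. The paper's route is economical because it reuses the semigroup machinery already needed for the evolution problem, and it is exactly what is needed later (the stationary solution is fed back into the dynamic comparison arguments); yours is self-contained, does not rest on the somewhat terse $L^p$-bootstrap sketched in Proposition~\ref{thm:existence}, and delivers an explicit quantitative conclusion ($g\in C^{1,\alpha}$ for every $\alpha<1$). Two small points worth making explicit in your version: the inversion $\hat g(k)=\hat h(k)/|k|$ for $k\neq 0$ presupposes the compatibility condition $\hat h(0)=0$, which is automatic since the $k=0$ mode of the stationary equation reads $0=\hat h(0)$; and the Schauder step on $T^n$ does need the localization or Besov-multiplier argument you sketch, since the standard references are stated on $\R^n$.
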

\begin{proof}
Plugging the weak solution $g$ of the stationary equation into the variation of constants formula for the mild solution of the evolution problem, we find that $g$ is a stationary mild solution, since
$$
e^{ -k t } \hat{g}(k) + \int_0^t e^{-k (t-s)} \widehat{f(\cdot,g(\cdot))}(k) \,\ds =  e^{ -k t } \hat{g}(k) + \int_0^t e^{-k (t-s)} k \,\hat{g}(k) \ds =  \hat{g}(k)
$$
for $k \in \{0,1,2,\dots\}$. From the regularity properties of mild solutions with initial conditions in $L^2$, we find that the stationary solution must be classical.
\end{proof}

The following theorem asserts the existence of a threshold force, up to which---but not above which---a stationary solution exists.
\begin{theorem}[Existence of a threshold force]
There exists $F^*\ge 0$ such that equation~\eqref{eq:stationary_F}$_F$
admits a solution for all $F \le F^*$, while it has no solution for $F>F^*$.
\end{theorem}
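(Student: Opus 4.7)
The plan is to define $F^*$ as the supremum of forces for which a stationary solution exists and show this supremum is attained, giving a closed interval of solvability. Let $S$ be the set of $F \ge 0$ for which \eqref{eq:stationary_F}$_F$ admits a solution, and set $F^* = \sup S \in [0,+\infty]$. By Proposition~\ref{thm:stationary_zero}, $0 \in S$, so $F^* \ge 0$. The theorem reduces to two claims: (a) $S$ is downward closed, i.e.\ $[0, F^*) \subseteq S$; and (b) if $F^* < \infty$, then $F^* \in S$.

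The more delicate step is (a). Fix $F \in S$ with stationary solution $g_F$ and let $0 \le F' \le F$. I would produce a solution at $F'$ by sandwiching and monotone evolution. The function $g_F$ satisfies
\[
-(-\Laplace)^{1/2} g_F + \varphi(\cdot, g_F) + F' \;=\; F' - F \;\le\; 0,
\]
so it is a stationary supersolution of \eqref{eq:evolution_F}$_{F'}$. For a subsolution, take the solution $g_0$ of \eqref{eq:stationary_F}$_0$ from Proposition~\ref{thm:stationary_zero}. Since $(-\Laplace)^{1/2}$ annihilates constants and $\varphi(x,\cdot)$ has period one, each integer shift $g_0 - N$ is again a stationary solution at $F=0$, hence a subsolution at $F' \ge 0$. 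Using $L^\infty$-boundedness from Proposition~\ref{prop:regularity_of_stationary}, I pick $N$ large enough that $g_0 - N \le g_F$ pointwise, and run \eqref{eq:evolution_F}$_{F'}$ from this initial datum. The comparison principle of Proposition~\ref{thm:existence} pins the evolution inside $[g_0 - N, g_F]$ and, by comparison with the time-shifted trajectory $g(\cdot, t+h)$, forces $t \mapsto g(\cdot,t)$ to be pointwise non-decreasing. To extract a stationary limit, I combine three ingredients: monotone pointwise convergence $g(\cdot,t) \uparrow g_\infty \in L^\infty$; the $L^2$ gradient-flow dissipation identity for the energy of Proposition~\ref{thm:stationary_zero}, which yields $\int_0^\infty \|g_t\|_{L^2}^2 \, \dt < \infty$ and therefore $\|g_t(\cdot, t_n)\|_{L^2} \to 0$ along some $t_n \to \infty$; and the uniform $H^{1/2}$ bound from Proposition~\ref{thm:energy}. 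Passing to the limit in \eqref{eq:evolution_F}$_{F'}$ along $t_n$, using Lipschitz continuity of $\varphi$ in its second argument to handle the nonlinear forcing, identifies $g_\infty$ as a weak---and hence classical, by Proposition~\ref{prop:regularity_of_stationary}---stationary solution, so $F' \in S$.

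For (b), suppose $F^* < \infty$ and pick $F_n \nearrow F^*$ with stationary solutions $g_n$. Integer-shift invariance in $y$ lets me normalize the spatial average $\bar g_n \in [0,1)$. Applying Proposition~\ref{thm:energy} to the constant-in-time trajectory $g(\cdot,t) \equiv g_n$ and sending $t \to \infty$ controls $[g_n]_{H^{1/2}}$ by a constant depending only on $\|\varphi\|_{L^\infty} + F^*$. Combined with the Poincaré-type bound $\|g_n - \bar g_n\|_{L^2} \lesssim [g_n]_{H^{1/2}}$ on $T^n$, this yields a uniform $\|g_n\|_{H^{1/2}}$ bound. The compact embedding $H^{1/2}(T^n) \hookrightarrow L^2(T^n)$ extracts an $L^2$-convergent subsequence with limit $g^*$; Lipschitz continuity of $\varphi$ gives $\varphi(\cdot, g_n) \to \varphi(\cdot, g^*)$ in $L^2$, and passing to the limit in the weak form of \eqref{eq:stationary_F}$_{F_n}$ produces a weak, hence classical, stationary solution at $F^*$.

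The main obstacle I expect is step (a): the monotone pointwise limit $g_\infty$ is automatic, but it need not solve the stationary equation without further work. The gradient-flow dissipation combined with the uniform $H^{1/2}$ control is what closes this, and it depends essentially on Lipschitz dependence of $\varphi$ in its second slot to transfer $L^2$ convergence of $g(\cdot, t_n)$ to convergence of the nonlinear forcing. Once (a) is in place, (b) is a standard compactness argument.
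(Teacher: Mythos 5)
Your overall architecture is the same as the paper's: define $F^*$ as the supremum of the set of forces admitting a stationary solution, show the supremum is attained by a compactness argument on a normalized sequence $g_n$ with averages in $[0,1)$, and show downward closedness by trapping the evolution between a stationary subsolution (an integer shift of the $F=0$ solution) and a stationary supersolution (a solution at a larger force). Your step (a) is in fact considerably more detailed than the paper's, which simply invokes the existence of ordered sub- and supersolutions; your route to the stationary limit via monotonicity in time, the gradient-flow dissipation identity $\int_0^\infty \|g_t\|_{L^2}^2\,\dt < \infty$, and the uniform $H^{1/2}$ bound is a legitimate way to fill in that step, and it works because the trajectory is confined to $[g_0-N, g_F]$ so the energy stays bounded below.

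There is one genuine omission: you never establish $F^* < \infty$. You park the case $F^* = +\infty$ with the phrase ``if $F^* < \infty$,'' but the theorem asserts the existence of a \emph{finite} threshold above which no solution exists, so this case must be excluded. The fix is one line and is the paper's first observation: integrating \eqref{eq:stationary_F}$_F$ over $T^n$ kills the half-Laplacian term (it has zero mean), leaving $F = -\avgint_{T^n} \varphi(x, g(x))\,\dx \le \|\varphi\|_{L^\infty}$, so no solution can exist for $F > \|\varphi\|_{L^\infty}$ and $F^*$ is finite. With that line added, your argument is complete and follows the paper's strategy.
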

\begin{proof}
Consider 
$$
\Phi = \{ F \ge 0 \textrm{ such that~\eqref{eq:stationary_F}$_F$ has a solution} \}.
$$
Clearly, because of Proposition~\ref{thm:stationary_zero}, $\Phi \neq \emptyset$. Also, if $F > \sup \varphi $, then~\eqref{eq:stationary_F}$ _F$ has no solution. Define, therefore, $F^* = \sup\{\Phi\} < \infty$. Two things remain to be shown in order to establish the result: \\
i) $F^* \in \Phi$. \\
ii) There is a solution to~\eqref{eq:stationary_F}$ _F$ for all $F<F^*$. \\
Proof of i): Consider a sequence $F_j \nearrow F^*$ and corresponding solutions $g_j$ of~\eqref{eq:stationary_F}$_{F_j}$ such that $0 \le \int_{T^n} g_j \le 1$. Such a sequence exists, otherwise $F^*$ could not be the supremum of $\Phi$. Since $\{F_n\}$ is bounded, Proposition~\ref{prop:regularity_of_stationary} asserts compactness of $\{g_n\}$ in $H^1$.  Therefore, there is a converging subsequence whose limit again satisfies~\eqref{eq:stationary_F}$_{F^*}$.\\
Proof of ii): Consider $0 < F < F^*$, and the solutions $g^*$ and $g_0$ to the stationary equation with external driving force $F^*$ and $0$, respectively. By the periodicity of $\varphi$, and the continuity of the solutions we can assume $g^*>g_0$. These solutions are also super- and subsolutions to~\eqref{eq:stationary_F}$_F$, respectively. Therefore, there exists a solution to~\eqref{eq:stationary_F}$_F$.
\end{proof}

The next theorem proves the existence of a space-time periodic solution to~\eqref{eq:evolution_F}$_F$ above the critical force.
\begin{theorem}[Existence of a space-time periodic solution]
For each $F>F^*$, there exists a unique $0 <T(F) < \infty$ and a unique function $g(x,t)$ satisfying~\eqref{eq:evolution_F}$_F$ such that
\begin{equation}
\label{eq:periodic}
g(x,t+T) = g(x,t)+1.
\end{equation} 
\end{theorem}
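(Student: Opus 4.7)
The plan is to cast~\eqref{eq:periodic} as a fixed-point problem for the time-$T$-advance map $\Phi_T\colon g_0 \mapsto g(\cdot,T;g_0) - 1$. By the periodicity $\varphi(x,y+1)=\varphi(x,y)$ together with uniqueness from Proposition~\ref{thm:existence}, the map $\Phi_T$ is monotone in the pointwise order and satisfies $\Phi_T(g_0+k) = \Phi_T(g_0)+k$ for every $k \in \mathbb{Z}$, so it descends to a map on the quotient $X := C(T^n)/\mathbb{Z}$, on which a pair $(g,T)$ satisfying~\eqref{eq:periodic} is exactly a fixed point.

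\textbf{Existence.} I would start a monotone iteration at the stationary solution $g^*$ of~\eqref{eq:stationary_F}$_{F^*}$ produced by the previous theorem. Since $F > F^*$, substituting $g^*$ into the right-hand side of~\eqref{eq:evolution_F}$_F$ yields $F - F^* > 0$, so $g^*$ is a strict subsolution of~\eqref{eq:stationary_F}$_F$. Comparing $u(\cdot,t;g^*)$ with its own time-shifts then gives pointwise monotonicity of $u$ in $t$, and together with the non-existence of a stationary solution at force $F$ this forces $u(x,t;g^*) \to \infty$ uniformly as $t \to \infty$; in particular $T_0 := \inf\{t > 0 : g(\cdot,t;g^*) \ge g^* + 1\} < \infty$. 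Setting $g_{n+1} := g(\cdot, T_n; g_n) - 1$ and $T_{n+1} := \inf\{t : g(\cdot,t;g_{n+1}) \ge g_{n+1} + 1\}$, the comparison principle implies the sequence $g_n$ is non-decreasing and $T_n$ is non-increasing. Passing to the quotient $X$, Proposition~\ref{thm:energy} together with the smoothing property of the semigroup generated by $-(-\Laplace)^{1/2}$ bounds each $g_n$ uniformly in a high Sobolev norm, yielding compactness of $(g_n,T_n)$. Any accumulation point $(g_\infty,T_\infty)$ with $T_\infty > 0$ satisfies $\Phi_{T_\infty}(g_\infty) = g_\infty$ in $X$ by continuity of the semigroup, and lifts back to the desired pair.

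\textbf{Uniqueness and main obstacle.} For uniqueness of $T$, I would introduce the rotation number $\rho := \lim_{t\to\infty} \bar u(t)/t$ of the spatial average; any pair satisfying~\eqref{eq:periodic} has $\rho = 1/T$, while the comparison principle applied to pairs of solutions whose initial data differ by a bounded amount shows that $\rho$ depends only on $F$, forcing the period to be unique. Uniqueness of the profile for a given $T$ then follows by translating one candidate vertically by an integer so that the two touch at some point, and invoking the strong maximum principle for the fractional Laplacian on their difference. The main technical obstacle I anticipate is securing the uniform positive lower bound on $T_n$ in the existence step: it amounts to a uniform $L^\infty$-estimate on $(-\Laplace)^{1/2} g_n$, which is not immediate from the $H^{1/2}$-bound of Proposition~\ref{thm:energy} since $H^{1/2}(T^n)$ does not embed into $L^\infty$ for $n \ge 1$, and will likely require a bootstrap on the regularity of the iterates via the variation-of-constants formula together with the Lipschitz bound on $\varphi$.
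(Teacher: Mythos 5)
Your uniqueness argument is essentially the paper's (Proposition~\ref{prop:unique}): the rotation-number formulation repackages the observation that two space-time periodic solutions with different periods would have to pass each other, violating the comparison principle, and the profile uniqueness via vertical integer translation until the two solutions touch is also how the paper proceeds (it exploits the recurrence of the touching point after one period rather than a strong maximum principle, but both close the argument). For existence, however, you take a genuinely different route---a monotone iteration started at the critical stationary solution $g^*$---whereas the paper applies Schauder's fixed point theorem to the Poincar\'e return map $\mathcal{T}(\xi_0)=\xi(\cdot,T(\xi_0))$ acting on the mean-zero component $\xi = g - \avgint g\,\dx$, where $T(\xi_0)$ is the first time the spatial average advances by $1$; the invariance of an $L^2$-ball and the compactness of $\mathcal{T}$ both come from the estimate of Proposition~\ref{thm:energy}, and no monotone structure is used.

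Two remarks on your existence step. First, the obstacle you single out is not the real one: since $g(\cdot,T_n;g_n)\ge g_n+1$ forces the spatial average to advance by at least $1$, and the average obeys $\abs{\tfrac{\dd}{\dt}\avgint g\,\dx} = \abs{\avgint f(\cdot,g)\,\dx}\le \linfty{f}$ (the fractional Laplacian averages to zero), the uniform lower bound $T_n\ge 1/\linfty{f}$ is immediate; no $L^\infty$ control of $(-\Laplace)^{1/2}g_n$ is required. The genuine gap is elsewhere: your iterates satisfy $g_{n+1}\ge g_n$ with equality at some point $x_n$, and the smoothing estimates bound their oscillation uniformly, but this only yields $0\le g_{n+1}-g_n\le C$ with $C$ independent of $n$; it does not make the increments tend to zero, so the means $\avgint g_n\,\dx$ may in principle drift to $+\infty$ while each step touches the previous one at a moving point. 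In the quotient $C(T^n)/\Z$ you do get accumulation points of $\{(g_n,T_n)\}$, but an accumulation point is a fixed point of the return map only if consecutive iterates become asymptotically close in the quotient, which is precisely what the possible drift destroys (you cannot invoke an upper barrier built from the periodic solution, since that is what you are constructing). You must either prove that $\{g_n\}$ is bounded in $C(T^n)$---monotonicity plus precompactness then gives genuine convergence and the argument closes---or drop monotonicity altogether and use a topological fixed point theorem on the return map, as the paper does. A minor related point: a fixed point of $\Phi_T$ on the quotient only gives $g(\cdot,T)=g_0+m$ for some $m\in\Z$, and one must still verify $m=1$ to obtain~\eqref{eq:periodic}.
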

\begin{proof}
Proposition~\ref{thm:existence} shows global existence of a classical solution. We will split the solution up into the evolution of the average and the evolution of the deviation of the average and use Schauder's fix point theorem to prove the existence of a solution satisfying~\eqref{eq:periodic}.

Define $p(t) = \avgint g(x,t) \,\dx$ and $\xi(x,t) = g(x,t) - p(t)$. The functions $p$ and $\xi$ then satisfy
\begin{eqnarray}
\dot{p}(t) &=& \avgint f_g(x,t) \,\dx, \\
\hat{\xi}_t(k,t) &=&  -k \hat{g}(k,t) +  \widehat{f(\cdot,g(\cdot,t))}(k) \quad \textrm{for $k\ge1$}.
\end{eqnarray}
Consider the initial condition $\xi(x,0) = \xi_0(x) \in L^2$, and $p(0) = 0$. Since $F>F^*$, no stationary solution exists. Therefore, there is $T(\xi_0) > 0$, such that $p(T(\xi_0)) = p(0)+1$, and a constant $\tau$, independent of $\xi_0$, such that $T(\xi_0) > \tau$. This follows from the fact that $\abs{\dot{p}} \le \linfty{f}$. We also have $T(\xi_0) < \infty$ for all $\xi_0 \in L^2$, since otherwise one could find a stationary solution to the problem at $F>F^*$, namely the pointwise limit for $t\to \infty$ of the solution to the evolution problem with initial condition $\xi_0$.

Now consider the nonlinear operator that advances the solution $\xi$ in time, such that
\begin{equation}
\mathcal{T}(\xi_0) = \xi(\cdot, T(\xi_0)).
\end{equation}
and we have
\begin{equation}
\hat{\xi}(T,k) = e^{-kT}\xi_0(k) + \int_0^T e^{-k(t-s)} \widehat{f(\cdot,g(\cdot,s))}(k) \,\ds \quad k\ge1. 
\end{equation}
From a similar calculation as in the proof of Proposition~\ref{thm:energy} it is clear that $\ltwo{\mathcal{T}(\xi_0)} \le e^{-\tau}\ltwo{\xi_0} + C$, therefore, for $A = C/(1-e^{-\tau})$, this operator maps the set $\ltwo{ \xi_0 } < A $ onto itself. The regularity estimate in Proposition~\ref{thm:energy} also shows that $\xi(T)$ is bounded in $H^{1/2}$ independent of $\xi_0$, as long as $\ltwo{ \xi_0 } < A$. The operator $\mathcal{T}$ is thus compact and an application of Schauder's fix point theorem yields the existence of a time-space periodic solution which is classical by Proposition~\ref{thm:existence}. Uniqueness of $T$ and $g$ follow from the Proposition~\ref{prop:unique} below.
\end{proof}

\begin{proposition}[Uniqueness of the space-time periodic solution]
\label{prop:unique}
The time-period $T$ for a solution to~\eqref{eq:evolution_F}$_F$ satisfying
\begin{equation}
g(x,t+T) = g(x,t)+1
\end{equation}
is unique. Also, the solution itself is unique up to a time-shift so that, given two solutions $g_1$ and $g_2$ there exists $t_0$ such that $g_1(x,t) = g_2(x,t+t_0)$.
\end{proposition}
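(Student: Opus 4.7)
The proof will combine the comparison principle with a sliding argument. The plan is to first establish uniqueness of $T$ by comparing two candidate solutions at times that are common multiples of both periods, and then to obtain uniqueness up to a time-shift by sliding one solution until it touches the other from above, closing the argument with a strong (nonlocal) minimum principle for the fractional Laplacian at the touching point.

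For uniqueness of $T$, I would let $g_1, g_2$ solve~\eqref{eq:evolution_F}$_F$ with periods $T_1$ and $T_2$ respectively. By adding a suitable integer to $g_2$ (which again gives a solution, because $\varphi$ has period one in its second argument), one can arrange $g_1(x,0) \le g_2(x,0)$ for every $x\in T^n$. The comparison principle from Proposition~\ref{thm:existence} then yields $g_1(x,t) \le g_2(x,t)$ for all $t\ge 0$. Iterating $g_i(x, t+T_i) = g_i(x,t) + 1$, at $t = N T_1 T_2$ one has
\begin{equation*}
g_1(x, NT_1T_2) = g_1(x, 0) + NT_2, \qquad g_2(x, NT_1T_2) = g_2(x, 0) + NT_1,
\end{equation*}
so $N(T_2 - T_1) \le g_2(x,0) - g_1(x,0)$ for every $N \in \N$. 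Since the right-hand side is bounded, this forces $T_1 \ge T_2$, and swapping the roles gives $T_1 = T_2 =: T$.

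For uniqueness up to a time-shift, I would introduce the sliding function $\Psi(s) := \min_{x\in T^n}(g_2(x,s) - g_1(x,0))$. It is continuous by compactness of $T^n$ and continuity of $g_i$, and satisfies $\Psi(s+T) = \Psi(s) + 1$, so its range is all of $\R$; hence there exists $s^*$ with $\Psi(s^*) = 0$, meaning $g_2(x, s^*) \ge g_1(x, 0)$ with equality at some $x_0$. Setting $v(x,t) := g_2(x, t+s^*) - g_1(x,t)$, the comparison principle gives $v \ge 0$ for $t \ge 0$, and space-time periodicity extends this to all $t \in \R$. Since $(x_0, 0)$ is then a global minimum at which $v = 0$, and $v$ is classical by Proposition~\ref{thm:existence}, one has $\partial_t v(x_0, 0) = 0$. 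Because the two arguments of $\varphi$ coincide at the touching point, the equation for $v$ collapses to $-(-\Laplace)^{1/2} v(x_0, 0) = 0$. The nonlocal minimum principle---a nonnegative function whose fractional Laplacian vanishes at a zero must itself be identically zero---then forces $v(\cdot, 0) \equiv 0$, and uniqueness of classical solutions (Proposition~\ref{thm:existence}) propagates this to $v \equiv 0$ for all $t$, whence $g_1(x,t) = g_2(x, t+s^*)$.

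The hard part will be the nonlocal strong minimum principle on the torus $T^n$: the kernel of $(-\Laplace)^{1/2}$ differs from its Euclidean counterpart, but the positivity of the kernel and the resulting rigidity when the fractional Laplacian vanishes at a minimum persist, and I would extract them either from the Fourier-series representation used throughout Section~\ref{sec:pinning} or from the periodic kernel representation. The remaining ingredients---continuity of $\Psi$, existence of the touching $s^*$, and the pointwise validity of the equation for $v$ at $(x_0,0)$---are routine consequences of Propositions~\ref{thm:existence} and~\ref{thm:energy}.
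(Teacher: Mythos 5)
Your argument is correct and follows the same overall strategy as the paper (comparison principle plus a sliding/touching argument), but it closes the second half differently and, in my view, more carefully. The paper finds a touching time and point just as you do, but then argues that the two ordered solutions must touch \emph{again} one period later and declares this to ``contradict the comparison principle''---which really presupposes the strong comparison principle it never states. You instead work directly at the first touching point: $\partial_t v(x_0,0)=0$, cancellation of the $\varphi$ terms, and the nonlocal strong minimum principle for $(-\Laplace)^{1/2}$ on the torus (positivity of the periodized kernel forces $v(\cdot,0)\equiv 0$ when the operator vanishes at a zero minimum of a nonnegative function). That is exactly the missing lemma the paper's one-line contradiction is implicitly relying on, so your version is the more complete one; the price is that you must actually verify the kernel representation on $T^n$, which you correctly identify as the only nontrivial ingredient. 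One small slip in your first part: the identity $g_1(x,NT_1T_2)=g_1(x,0)+NT_2$ requires $NT_2\in\N$, which need not hold. Evaluate instead at $t=NT_1$ and write $NT_1=kT_2+s$ with $k=\lfloor NT_1/T_2\rfloor$, $s\in[0,T_2)$; the $O(1)$ error from $g_2(\cdot,s)$ over the compact time interval $[0,T_2]$ does not affect the conclusion, and the same divergence argument gives $T_1=T_2$. This is the same ``the faster solution overtakes the slower one'' idea as in the paper, made quantitative.
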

\begin{proof}
Assume that there exist two space-time periodic solutions $g_1$ and $g_2$, with time constants $0 < T_2 < T_1 < \infty$. Since the solutions are continuous and invariant under translations by an integer, one can find $N \in \N$ such that, for some time $t_0$, one has $g_1(\cdot, t_0) \le g_2(\cdot, t_0) + N$. But since $T_2 < T_1$ there exists a time $T$ after which the two solutions would have passed each other, contradicting the comparison principle.

Now, consider a solution $G_1$ with the initial condition $g_1(\cdot, t_0)$ and a solution $G_2$ with initial condition $g_2(\cdot, t_0)$. There have to exist a time $T$, an integer $N$, and a point $x_0$ such that we have
\begin{equation}
G_1(x_0, T) = G_2(x_0, t_0) + N \quad \textrm{and} \quad G_1(\cdot, T) \le G_2(\cdot, t_0) + N,
\end{equation}
i.e., the solutions have to touch at some time. Evolving both solutions in time from there on, one can see that they touch again after one time period. This, however, again contradicts the comparison principle, unless $G_1(\cdot, T+t) = G_2(\cdot, t)$.
\end{proof}

\section{Power laws near the depinning transition}
\label{sec:numerical_depinning}

In this section we investigate the behavior of the average interface velocity near the depinning transition. We first note that in the ODE model studied in~\cite{Abeyaratne_96a, Bhattacharya_99b}, namely
\begin{equation}
\label{eq:ODE_dep}
\dot{g}(t) = \varphi(g(t)) + F,
\end{equation}
for a Lipschitz continuous, 1-periodic function $\varphi \colon \R \to \R$ and $F>F_c = -\min \varphi$, one can obtain any behavior between 
$$
\bar{v} = \frac{C}{-\log{\abs{F-F_c}}} \quad \textrm{and} \quad \bar{v} = C\abs{F-F_c}^1,
$$
in leading order for some constant $C>0$, where $\bar{v} = \left(\int_0^1 \frac{\mathrm{d}g}{F + \varphi(g)} \right)^{-1}$ is the average interface velocity. The limiting case of a logarithmic behavior can be produced by taking
$$
\varphi(g) = 2\abs{g-1/2}-1,
$$
which yields $\bar{v} = \frac{1}{\log(F) - \log(F-1)}$ and thus behaves like $ \frac{1}{-\log{\abs{F-F_c}}}$ for $F$ close to $F_c = 1$.

On the other hand, consider 
$$
\varphi(g) = \left\{ \begin{array}{ll} -4g &\textrm{for $0\le g<1/4$} \\
-1 & \textrm{for $1/4\le g<3/4$} \\
-1+4(g-3/4) &  \textrm{for $3/4\le g<1$},
\end{array}\right.
$$
which yields $\bar{v} = \frac{2(F-1)}{1-\log(F)+F\log(F)+\log(F-1)-F\log(F-1)}$. One can easily see that $\bar{v} = 2(F-1)$ in leading order for $F>1$.

For the non-degenerate case of $\varphi$ admitting a non-vanishing second derivative at its minimum, the square root power-law of $\bar{v} = C\abs{F-F_c}^{1/2}$ has been shown in~\cite{Abeyaratne_96a, Bhattacharya_99b} for the ODE case~\eqref{eq:ODE_dep}. As stated in the introduction, under similar non-degeneracy conditions, this power-law behavior has been proved for the case of a parabolic model~\eqref{parabolic} by Dirr and Yip in~\cite{Dirr_06a}.

\subsection{Numerical method for the investigation of the depinning transition}
\begin{table}
\centering
\begin{tabular}{cc}
Number of Fourier coefficients & 1024\\
Length over which the interface velocity is averaged &  4\\
Initial upper bound for $F^*$ in bisection & 0.5\\
Initial lower bound for $F^*$ in bisection & 0 \\  
Threshold for accuracy of  $F^*$ & $2\cdot 10^{-9} $\\
Coefficient of elastic force & 0.1 \\
Time step &  $1\cdot 10^{-3}$ \\
Threshold for stuck interface & $1\cdot 10^{-14}$
\end{tabular}
\caption[Parameters used for the numerical examination of the depinning transition]{Parameters used for the numerical examination of the depinning transition}
\label{tab:general_depinning}
\end{table}

In this section we show some numerical results yielding a good agreement with the square-root power law behavior in the case of smooth, non-degenerate obstacles. As it was shown in the beginning of this section, this behavior is not generic -- in fact, also for our model any depinning behavior seen in the ODE model~\eqref{eq:ODE_dep} can be reproduced by simply picking a heterogeneity $\varphi(x,t)$ independent of $x$.

It follows that the depinning behavior depends very sensitively on the discretization of the pinning force, since a piecewise linear discretization of a smooth obstacle field for example can easily destroy the original power-law behavior. So, starting with an initial configuration $g(0) = 0$ and a fixed applied load $F$, we numerically integrate equation~\eqref{eq:approx_model} using an explicit first-order Euler scheme. This scheme is the most appropriate here, since a very small time step has to be chosen in order not to `jump over' critically pinned states near the depinning transition, which immediately renders implicit schemes useless. We also avoid using a higher-order scheme, since  we want to sample the pinning force at short intervals for the numerical integration and not approximate it by a higher-order polynomial.

The elastic force in our scheme, however, is calculated to high accuracy using discrete Fourier transforms. In order to obtain a smooth discretization of the pinnign fore, the heterogeneity is constructed using a cubic B-spline. Once the interface has traveled a certain length on average (and never got stuck on the way), the final time is recorded.  This way, a relation between the average velocity $\bar{v}$ and $F$ is obtained. The interface is considered stuck if the $L^2$ norm of the driving force $f$ drops below a certain threshold. This `inner loop' is repeated with $F$ chosen each time through a bisection algorithm, thus giving new upper and lower bounds for the critical $F^*$ at each run. The program terminates after a certain accuracy for determining $F^*$ has been reached. In Table~\ref{tab:general_depinning} the standard parameters for the simulation can be found.

\subsection{Simulations}
\begin{figure}
\centering
\subfigure[Force distribution used to examine the general depinning behavior. The `up then down'  bumps model the $x_2$ derivative of an attractive potential well.]
{
\label{fig:standard_pinningforce}
\includegraphics[width=0.43\textwidth]{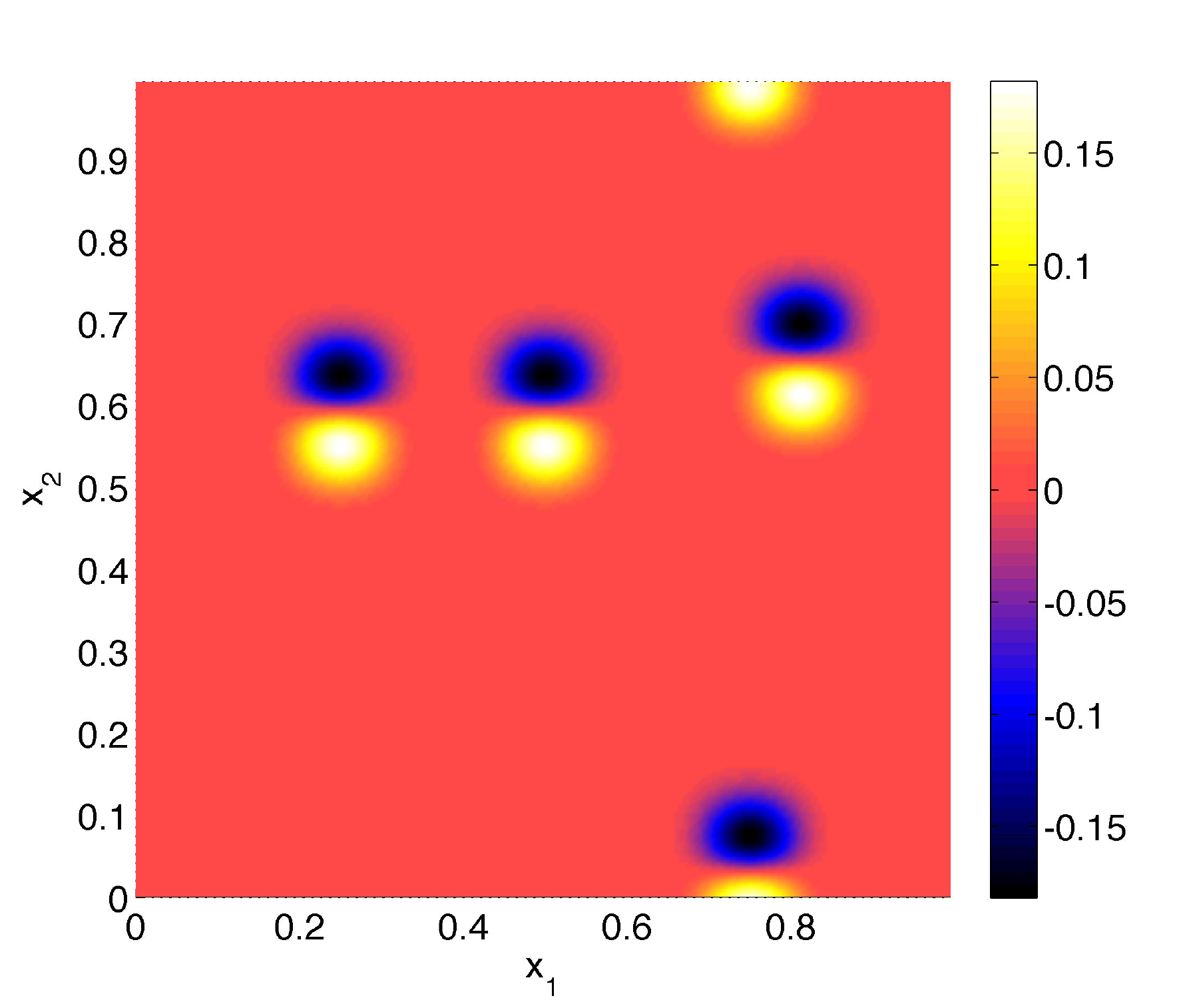}
}
\subfigure[The evolution of the interface through one period for $F=0.04$. Snapshots were taken at equal time intervals, so one can see that the interface spends most of its time near the critical pinned state (see Figure~\ref{fig:standard_stuck})]
{
\label{fig:standard_evolution}
\includegraphics[width=0.43\textwidth]{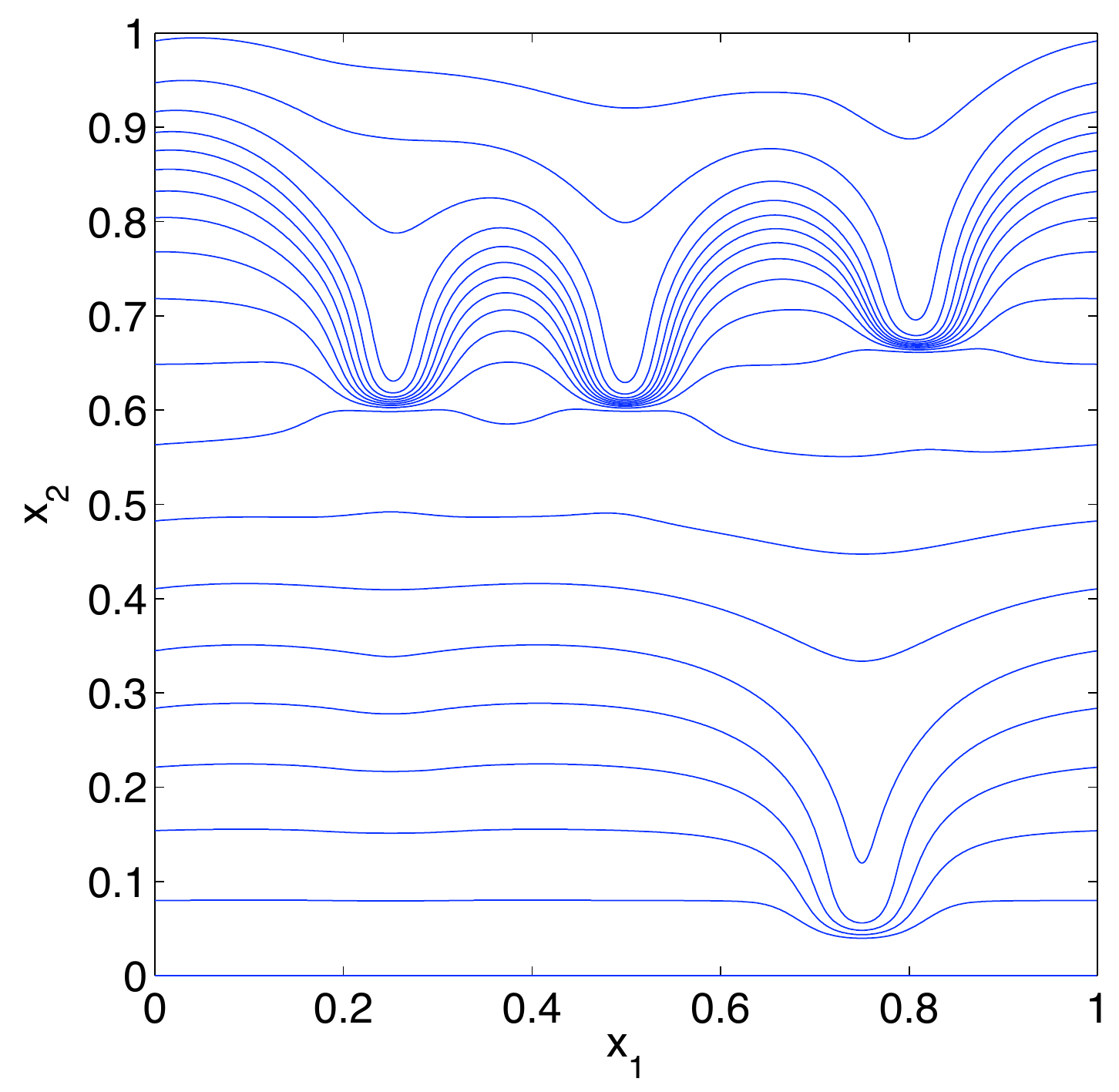}
}
\subfigure[Interface stuck at the inclusions for $F=0.03$]
{
\label{fig:standard_stuck}
\includegraphics[width=0.43\textwidth]{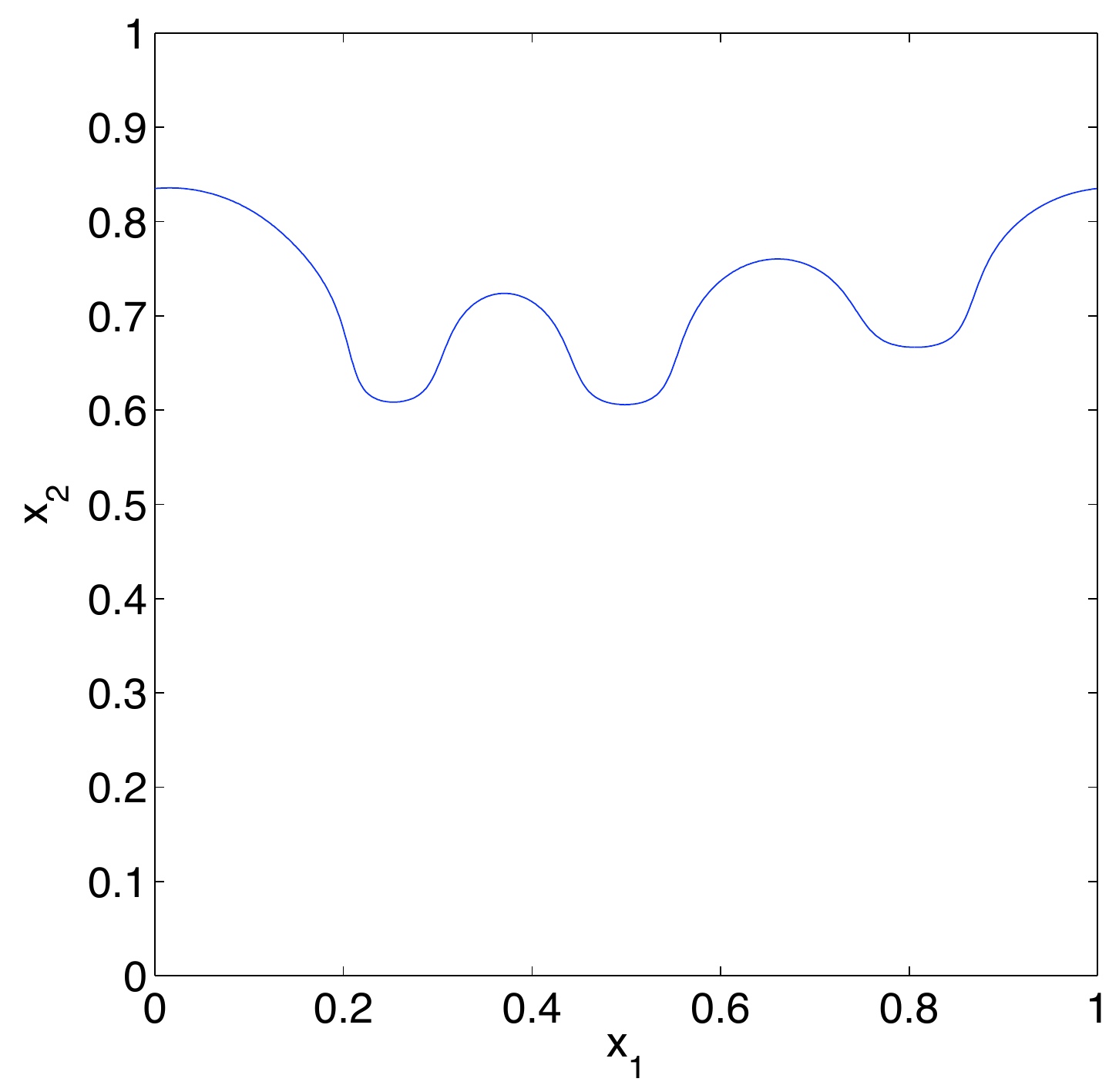}
}
\subfigure[Power law of the depinning transition]
{
\label{fig:standard_depinning_power}
\includegraphics[width=0.43\textwidth]{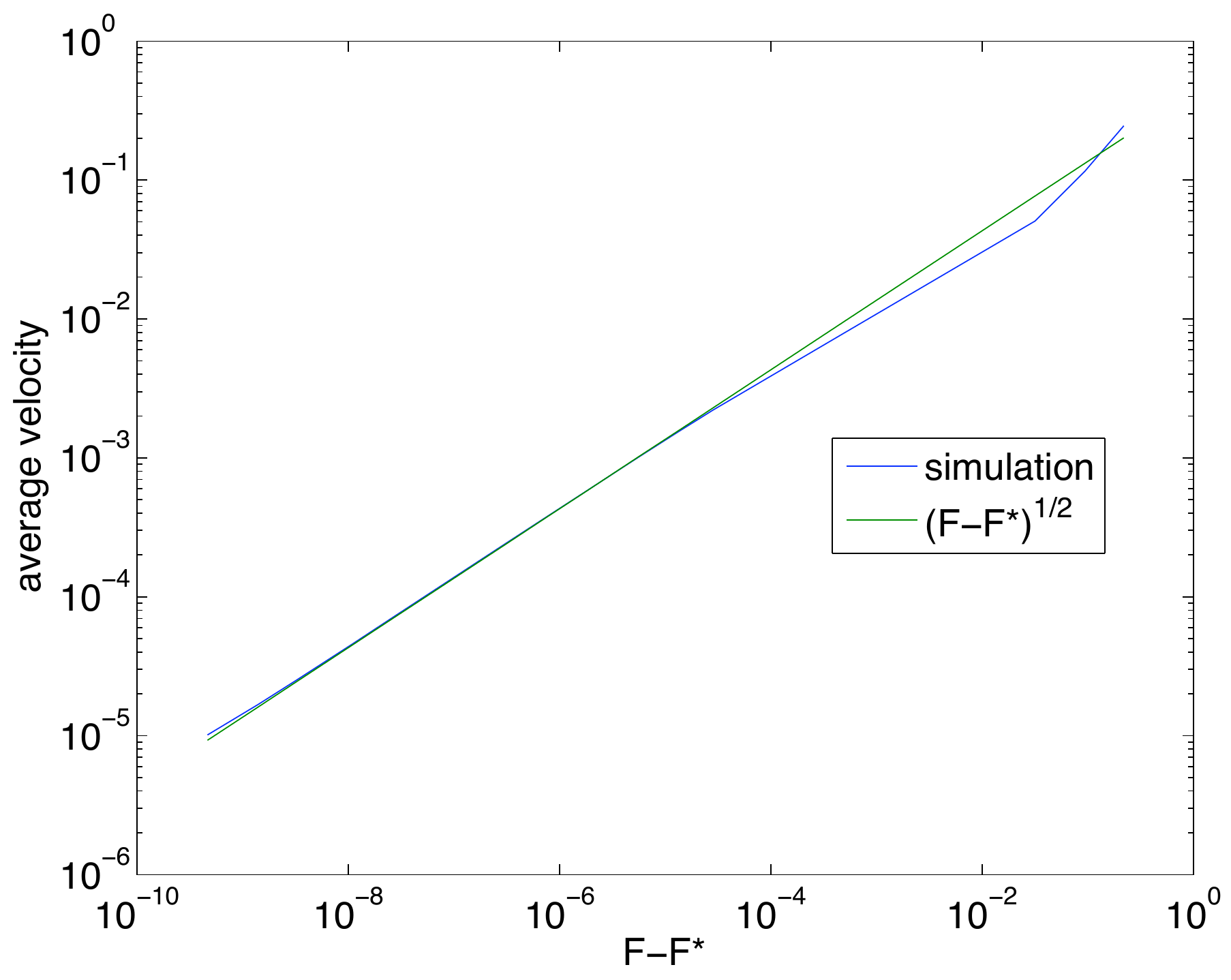}
}
\caption[Experiment 1, the general depinning behavior]{Experiment 1, the general depinning behavior}
\end{figure}
\begin{figure}
\centering
\subfigure[$1/8$]
{
\includegraphics[width=0.43\textwidth]{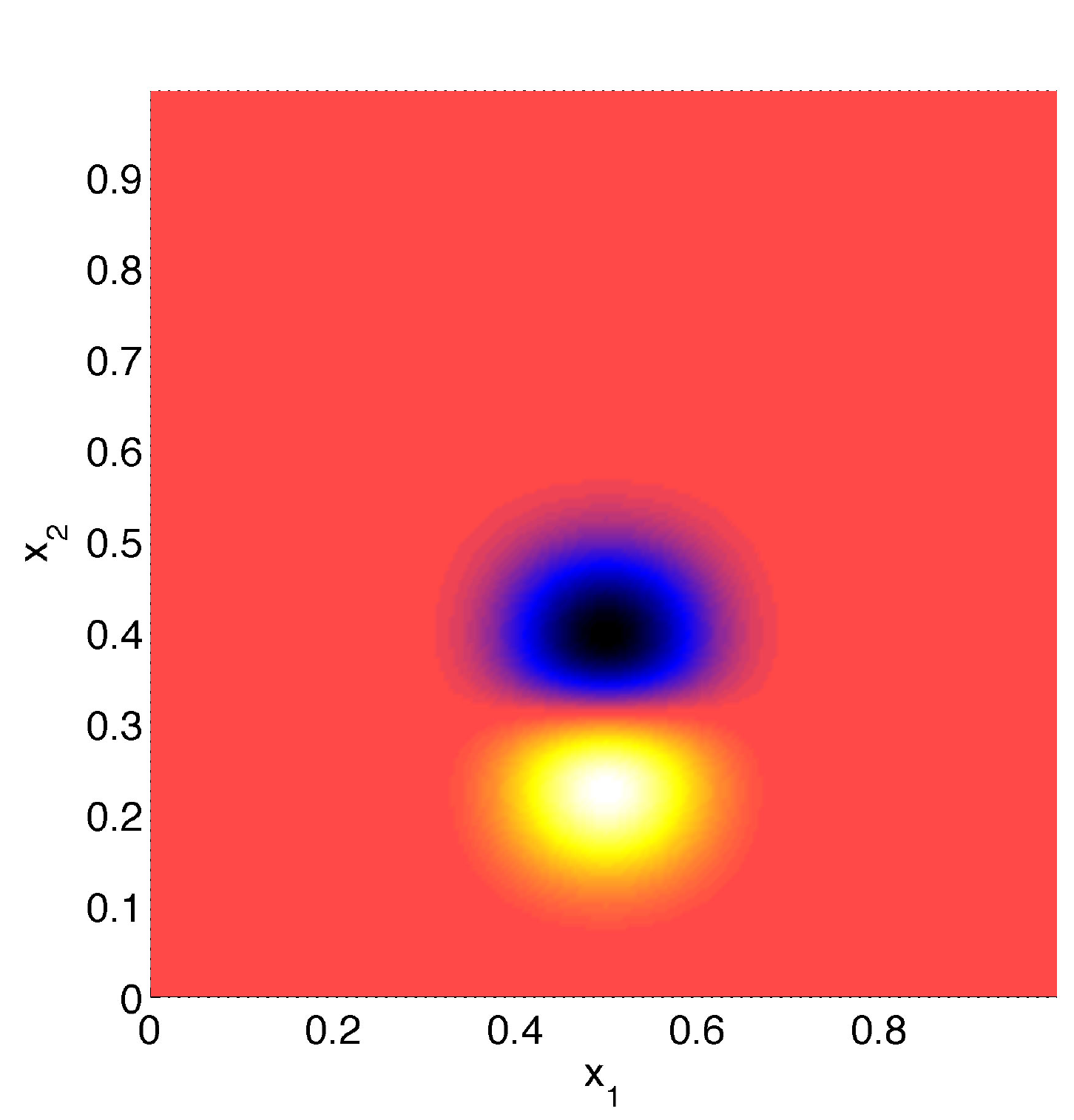}
}
\subfigure[$1/32$]
{
\includegraphics[width=0.43\textwidth]{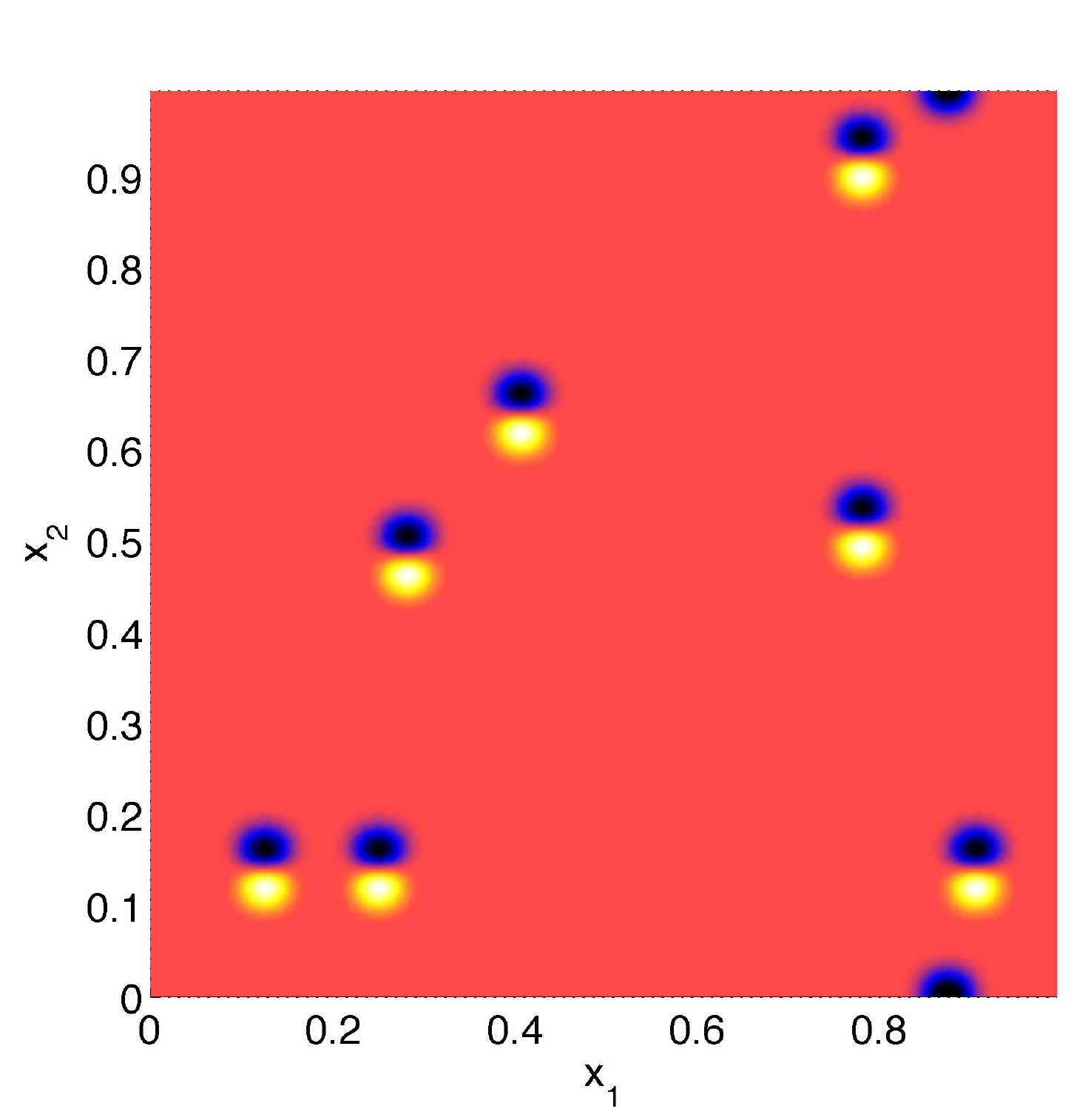}
}
\subfigure[$1/64$]
{
\includegraphics[width=0.43\textwidth]{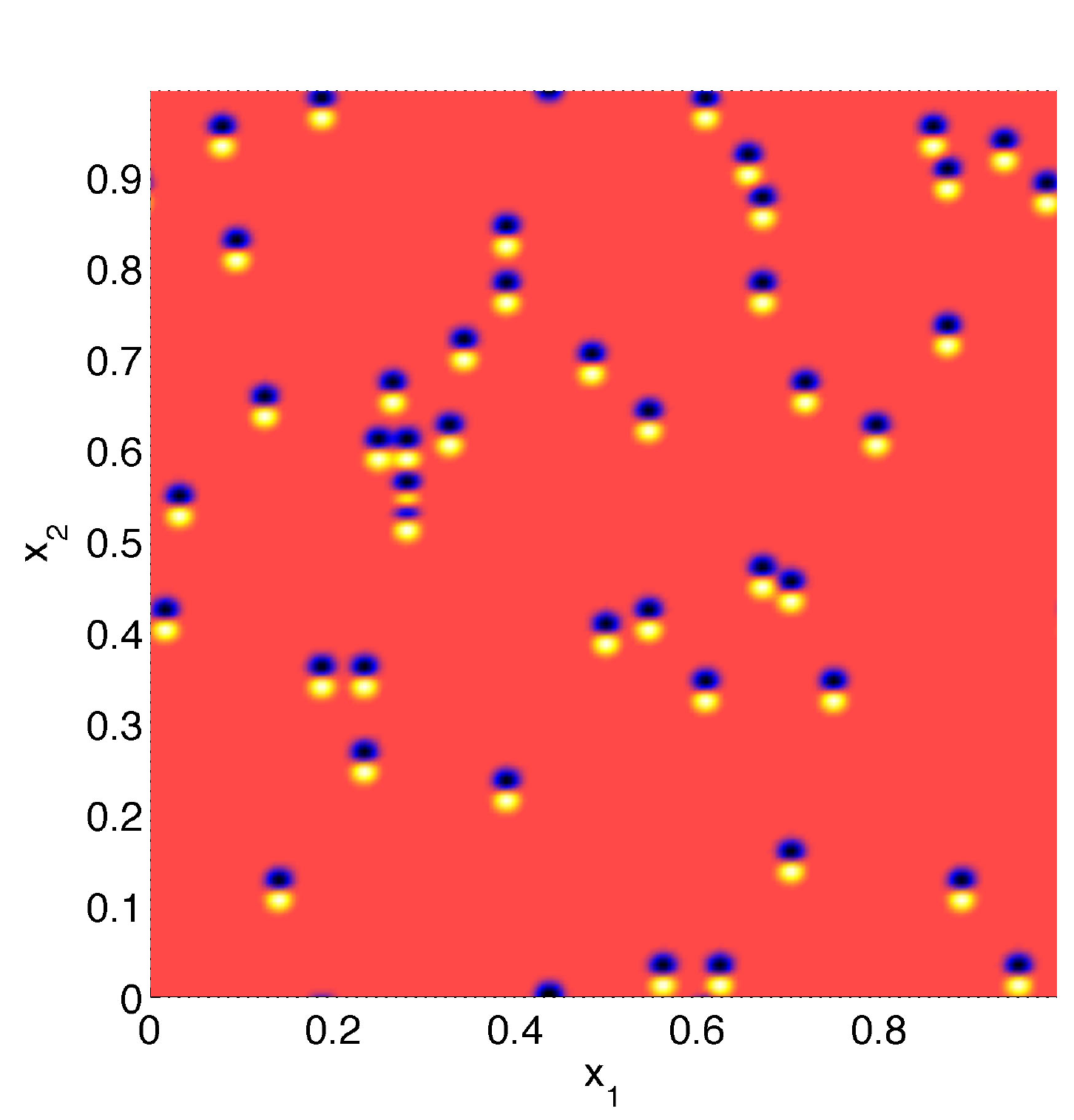}
}
\subfigure[$1/128$]
{
\includegraphics[width=0.43\textwidth]{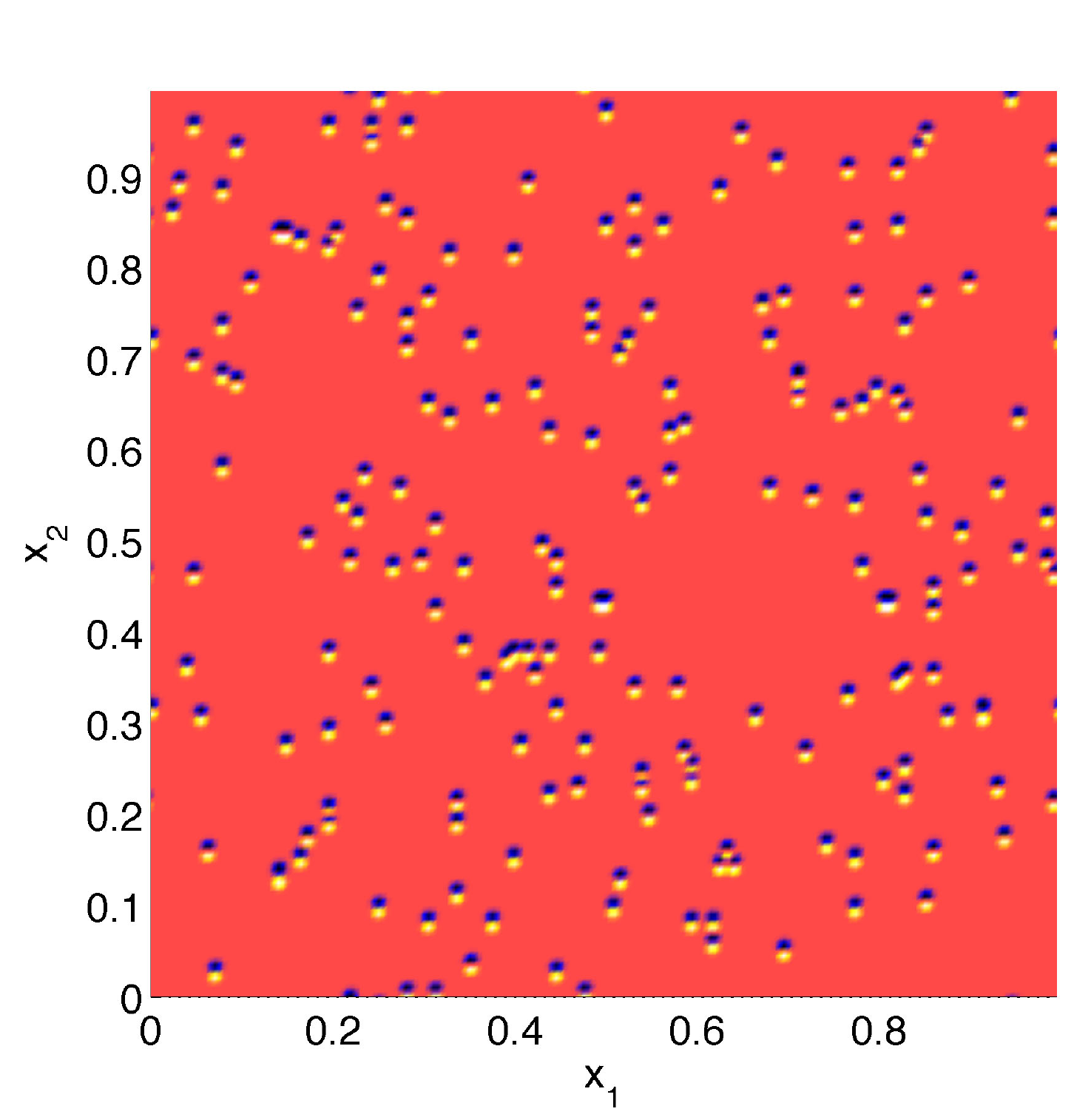}
}
\caption[The local pinning forces used in Experiment 2]{The local pinning forces used in Experiment 2. The maximum and minimum force are the same for all simulations, the grid on which the spline is discretized, however, is of the indicated size. Pinning sites are then distributed randomly with constant probability of occurrence. The pinning force for discretization size can be found in Figure~\ref{fig:standard_pinningforce}, where also a relation between the color and the magnitude of the force is given.}
\label{fig:size}
\end{figure}
\begin{table}
\centering
\begin{tabular}{cc}
Size of pinning sites & Critical applied force $F^*$ \\
$1/8$ &  $0.0204$ \\
$1/16$ &  $0.0307$ \\
$1/32$ &   $0.0150$\\
$1/64$ &   $0.0143$\\
$1/128$ &  $0.0119$\\
\end{tabular}
\caption[Dependence of the critical applied force on the size of the pinning sites]{Dependence of the critical applied force on the size of the pinning sites}
\label{tab:size_crit}
\end{table}
\begin{figure}
\centering
\includegraphics[width=0.6\textwidth]{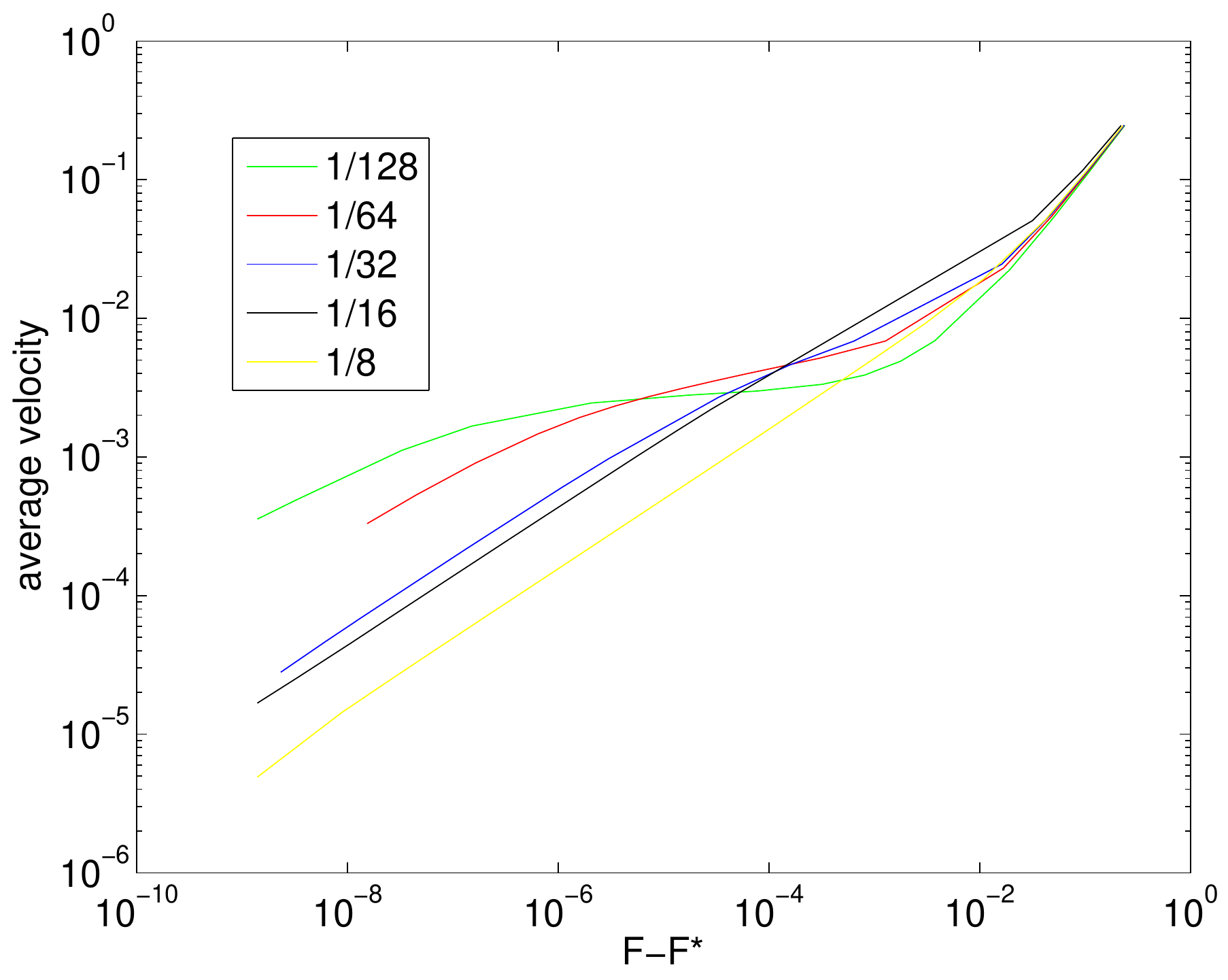}
\caption[The depinning behavior for different sizes of pinning sites]{The depinning behavior for different sizes of pinning sites. One can see that the onset square root power law behavior is pushed towards lower applied forces for smaller (and therefore sharper) pinning sites.}
\label{fig:size_depinning}
\end{figure}

\begin{description}
\item{\bf Experiment 1: General depinning behavior.}
As a standard example, we use a local driving force $\varphi(x_1, x_2) = \frac{\partial}{\partial x_2} \Phi(x_1, x_2)$, where $\Phi$ is a potential that has smooth dips of a fixed depth and radius at random points. We approximate $\varphi$ by a cubic $C^2$ spline curve. The exact force used in this simulation is depicted in Figure~\ref{fig:standard_pinningforce}. The constants used for this simulation are shown in Table~\ref{tab:general_depinning}. The evolution of the interface through one period is shown in Figure~\ref{fig:standard_evolution}, where one can see that the interface spends most of its time near the critical stuck state depicted in Figure~\ref{fig:standard_stuck}. In Figure~\ref{fig:standard_depinning_power}, the relation between the average velocity, compared to a square-root power law is shown. The fit over almost three decades is excellent and at the very high end of the applied force one can see that the velocity turns toward a linear dependence on the applied force, as expected.

\item{\bf Experiment 2: Comparison of the depinning behavior for different sizes of pinning sites}
For this experiment, we use five different sizes pinning sites, yielding the force distribution as depicted in Figure~\ref{fig:size}. All other parameters are kept as in Experiment 1. The dependence of the critical external force $F^*$ is shown in Table~\ref{tab:size_crit}. The depinning behavior is shown in Figure~\ref{fig:size_depinning}.
\end{description}

%\section{Conclusions}
%Starting from a free boundary model for the martensitic phase transformation, an approximate model for a nearly flat interface was derived rigorously through the method of $\Gamma$-convergence. 
%For the resulting model the existence of a critical applied stress, below which the phase boundary is pinned, and above which the phase boundary can freely move, was proved rigorously in the present article. 

\section*{Acknowledgements}
This work draws from the doctoral thesis of Patrick Dondl at the California Institute of Technology.  It is a pleasure to acknowledge discussions with Bogdan Craciun, Nicolas Dirr and Aaron Yip.  We gratefully acknowledge the financial support of the National Science Foundation (ACI-0204932, DMS-0311788),

\bibliography{pinning}
\bibliographystyle{plain}

\end{document}